\newtheorem{theorem}{Theorem}
\newtheorem{corollary}[theorem]{Corollary}
\newtheorem{definition}[theorem]{Definition}
\newtheorem{example}[theorem]{Example}
\newtheorem{lemma}[theorem]{Lemma}
\newtheorem{proposition}[theorem]{Proposition}
\newtheorem{remark}[theorem]{Remark}
\newenvironment{proof}[1][Proof]{\noindent\textbf{#1.} }{\ \rule{0.5em}{0.5em}}
\begin{document}

\title{Instantons in $G_{2}$ manifolds from $J$-holomorphic curves in coassociative
submanifolds }
\author{Naichung Conan Leung, Xiaowei Wang and Ke Zhu}
\maketitle

\begin{abstract}
In $G_{2}$ manifolds, $3$-dimensional associative submanifolds (instantons)
play a role similar to $J$-holomorphic curves in symplectic geometry. In
\cite{LWZ1}, instantons in $G_{2}$ manifolds were constructed from regular
$J$-holomorphic curves in coassociative submanifolds. In this exposition
paper, after reviewing the background of $G_{2}$ geometry, we explain the main
ingredients in the proofs in \cite{LWZ1}. We also construct new examples of instantons.

\end{abstract}

\section{Introduction}

\bigskip A $G_{2}$-manifold $M$ is a Riemannian manifold of dimension seven
equipped with a nontrivial parallel $2$-fold \textit{vector cross product
}(VCP) $\times$. If the VCP is $1$-fold instead, namely a Hermitian complex
structure, then the manifold is a K\"{a}hler manifold $X$.\footnote{Throughout
this paper $M$ is a $G_{2}$-manifold and $X$ is a symplectic manifold possibly
with extra structures, like K\"{a}hler or CY.} Note that every $2$-fold VCP
comes from the restriction of the algebra product on the octonions
$\mathbb{O}$ or quaternions $\mathbb{H}$ to its imaginary part. Thus the only
submanifolds in $M$ preserved by $\times$ are of dimension three and they are
called \textit{instantons} or \textit{associative submanifolds}, which are
analogs to holomorphic curves in K\"{a}hler manifolds.

In Physics, $G_{2}$-manifolds are internal spaces for compactification in
M-theory in eleven dimensional spacetimes, similar to the role of
\textit{Calabi-Yau threefolds} $X$ in string theory in ten dimensional
spacetimes. Instantons in string theory are holomorphic curves $\Sigma$ in $X$
with the natural boundary condition being $\partial\Sigma$ lies inside a
Lagrangian submanifold $L$ in $X$. Very roughly speaking, the Fukaya category
$Fuk\left(  X\right)  $ is defined by counting holomorphic disks with
Lagrangian boundary condition. The homological mirror symmetry (HMS)
conjecture of Kontsevich says that $Fuk\left(  X\right)  $ is equivalent to
the derived category $D^{b}\left(  X^{\vee}\right)  $ of coherent sheaves of
the mirror manifold $X^{\vee}$. The proof of this conjecture and its
generalizations in many specific cases relies on the work of Fukaya-Oh
\cite{Fukaya Oh} when $X=T^{\ast}L$ and the Lagrangian $L_{t}$ is the graph of
an exact one form $df$ scaled by small $t\in\mathbb{R}$. They showed that
holomorphic disks with boundary in $L\cup L_{t}$ one to one correspond to
gradient flow lines of $f$, which are in fact instantons in quantum mechanics
according to Witten's Morse theory \cite{Wi Morse SUSY}. (Fukaya-Oh
\cite{Fukaya Oh} actually proved the case of $k$ Lagrangians $L_{t}%
^{j}=tdf_{j}$ ($j=1,2,\cdots k$), where holomorphic polygons bounding on these
Lagrangians correspond to gradient flow trees of the Morse functions $\left\{
f_{j}\right\}  _{j=1}^{k}$.)

In \cite{LWZ1} we proved a corresponding result for instantons in any $G_{2}%
$-manifold with boundary in the coassociative submanifold $C\cup C_{t}$. Here
the family $C_{t}$ is constructed by a self-dual harmonic two form $\omega$ on
the four manifold $C$. We assume that $\omega$ is non-degenerate, thus
defining an almost complex structure $J$ on $C$. Our main result Theorem
\ref{correspondence} gives a correspondence between such instantons in $M$ and
$J$-holomorphic curves $\Sigma$ in $C$. Thus the number of instantons in $M$
is related to the Seiberg-Witten invariant of $C$ by the celebrated work of
Taubes on GW=SW (\cite{Ta Gr Sw}, \cite{Ta SW equal GW}, \cite{Ta SW GW}). We
suspect this holds true without the need of the non-degeneracy of $\omega$.

Our result is similar to the $k=2$ case of Fukaya-Oh \cite{Fukaya Oh}, however
the analysis involved in the proof is essentially different from theirs in the
following 3 aspects.

\begin{enumerate}
\item The instanton equation is on $3$\emph{-dimensional} domains, and there
is no analogous way of finding associative submanifold by constructing\emph{
associative maps }as was done in constructing $J$-holomorphic curves thanks to
the\emph{ conformality }of Cauchy-Riemann equation. So we have to deform
\emph{submanifolds} rather than \emph{maps} as in the Lagrangian Floer theory.
A good choice of normal frames of submanifolds turns out to be essential.

\item The instanton equation is \emph{more nonlinear} than the Cauchy-Riemann
equation for $J$-holomorphic curves. It is a first order PDE system involving
cubic terms $\frac{\partial V^{i}}{\partial x^{1}}\frac{\partial V^{j}%
}{\partial x^{2}}\frac{\partial V^{k}}{\partial x^{3}}$, while the
Cauchy-Riemann operator $\frac{\partial u}{\partial\tau}+J\left(  u\right)
\frac{\partial u}{\partial t}$ has no product of derivative terms.
Consequently, the needed \emph{quadratic estimate} appears to be unavailable
in the $W^{1,p}$ setting, as for $V^{i},V^{j},V^{k}\in W^{1,p}$,
$\frac{\partial V^{i}}{\partial x^{1}}\frac{\partial V^{j}}{\partial x^{2}%
}\frac{\partial V^{k}}{\partial x^{3}}\notin L^{p}$ in general. So instead we
use the \emph{Schauder }($C^{1,\alpha}$) \emph{setting}, which creates new
complications (Subsection \ref{D-inverse}).

\item The linearized instanton equation is \emph{more weakly coupled} than the
Cauchy-Riemann equation. It is a Dirac type equation for spinor $\left(
u,v\right)  \in\mathbb{S}^{+}\oplus\mathbb{S}^{-}$ where $u$ and $v$ play the
role of the real and imaginary parts in Cauchy-Riemann equations, but the
interrelation between $\nabla u$ and $\nabla v$ becomes weaker. This causes
several difficulties in the $C^{1,\alpha}$ estimates (See comments below
$\left(  \ref{CR1}\right)  $).
\end{enumerate}

Besides above difficulties, our domains are $\left[  0,\varepsilon\right]
\times\Sigma$ for compact Riemannian surfaces $\Sigma$, and they
\emph{collapse} to $\Sigma$ as $\varepsilon\rightarrow0$, causing \emph{lack
of uniform ellipticity,} which in turn creates difficulty to obtain a uniform
right inverse bound needed in gluing arguments. This also occurs in
Proposition 6.1 of Fukaya-Oh \cite{Fukaya Oh} in the $W^{1,p}$ setting, but in
our $C^{1,\alpha}$ setting the boundary estimates become more subtle.

To deal with these difficulties, our paper \cite{LWZ1} becomes rather
technical. Therefore in this article we \ give an outline of the main
arguments. The organization is as follows. In Section \ref{G2}, we review the
background of $G_{2}$ geometry, instantons, coassociative boundary condition
and give the motivations of counting instantons. In Section
\ref{thin-instanton} we state the main theorem in \cite{LWZ1} and explain the
main ingredients in the proof. In Section \ref{application} we apply the
theorem to construct new examples of instantons and discuss possible generalizations.

\textbf{Acknowledgement.} We thank S.-T. Yau for comments on our paper
\cite{LWZ1}, and Baosen Wu for useful discussions on complex surfaces. The
third author thanks Clifford Taubes for interest and support. The work of N.C.
Leung was substantially supported by a grant from the Research Grants Council
of the Hong Kong Special Administrative Region, China (Project No.
CUHK401411). The work of K. Zhu was partially supported by the grant of
Clifford Taubes from the National Science Foundation.

\section{$\bigskip$Review on $G_{2}$-geometry\label{G2}}

\subsection{$G_{2}$ manifolds}

$G_{2}$ manifolds are $7$-dimensional Riemannian manifolds $\left(
M,g\right)  $ with a parallel\emph{\ vector cross product} (VCP) $\times$,
i.e. for $u,v\in T_{p}M$ there is a VCP $\times$ such that
\[%
\begin{array}
[c]{cl}%
\text{(i) } & u\times v\,\text{\ is perpendicular to both }u\text{ and }v.\\
\text{(ii) } & \left\vert u\times v\right\vert =\left\vert u\wedge
v\right\vert =\text{Area of the parallelogram spanned by }u\text{ and }v,
\end{array}
\]
and $\times$ is invariant under the parallel transport of the Levi-Civita
connection $\nabla$.

In the octonion algebra $\mathbb{O}$, we can construct a VCP $\times$ on
$\operatorname{Im}$ $\mathbb{O}$ as follows:
\begin{equation}
u\times v=\operatorname{Im}\left(  \overline{v}u\right)  ,
\label{cross-product}%
\end{equation}
where $\overline{v}$ is the conjugate of $v$. The same formula for
$\mathbb{H}$ gives another VCP on $\operatorname{Im}\mathbb{H}$, which is
indeed the useful vector product on $\mathbb{R}^{3}$. Together, they form the
complete list of VCP because the normed algebra structures can be recovered
from the VCP structures. In particular,
\[
Aut\left(  \mathbb{R}^{7},\times\right)  =Aut_{alg}\left(  \mathbb{O}\right)
\text{.}%
\]
Hence a $G_{2}$ manifold is simply a $7$-dimensional Riemannian manifold
$\left(  M,g\right)  $ with holonomy group inside the exceptional Lie group
$G_{2}=Aut_{alg}\left(  \mathbb{O}\right)  $.

Equivalently, $G_{2}$ manifolds are $7$-dimensional Riemannian manifolds
$\left(  M,g\right)  $ with a \emph{nondegenerate }$3$\emph{-form }$\Omega$
such that $\nabla\Omega=0$. The relation between $\Omega$ and $\times$ is
\[
\Omega\left(  u,v,w\right)  =g\left(  u\times v,w\right)  \text{,
}\ \ \text{for }u,v,w\in T_{p}M.
\]

\begin{example}
\label{R7}(Linear case) The $G_{2}$ manifold $\operatorname{Im}\mathbb{O\simeq
R}^{7}$: Let
\[
\mathbb{R}^{7}\simeq\operatorname{Im}\mathbb{O}\simeq\operatorname{Im}%
\mathbb{H\oplus H=}\left\{  \left(  x_{1}\mathbf{i}+x_{2}\mathbf{j}%
+x_{3}\mathbf{k},x_{4}+x_{5}\mathbf{i}+x_{6}\mathbf{j}+x_{7}\mathbf{k}\right)
\right\}  ,
\]
and the standard basis $e_{i}=\frac{\partial}{\partial x_{i}}\left(
i=1,2,\cdots7\right)  $. The vector cross product $\times$ for $u,v\in
\operatorname{Im}\mathbb{O}$ is defined by $\left(  \ref{cross-product}%
\right)  $.The standard $G_{2}$ $3$-form $\Omega=\Omega_{0}$ \ is%
\[
\Omega_{0}=\omega^{123}-\omega^{167}-\omega^{527}-\omega^{563}-\omega
^{154}-\omega^{264}-\omega^{374},
\]
where $\omega^{ijk}=dx^{i}\wedge dx^{j}\wedge dx^{k}$.
\end{example}

\begin{example}
\label{Calabi-Yau-G2}(Product case) $M=X\times S^{1}$ is a $G_{2}$ manifold if
and only if $X$ is a Calabi-Yau threefold. The $G_{2}$ $3$-form $\Omega$ of
$X$ is related to the holomorphic volume form $\Omega_{X}$ and the K\"{a}hler
form $\omega_{X}$ of $X$ as follow
\[
\Omega=\operatorname{Re}\Omega_{X}+\omega_{X}\wedge d\theta,
\]
where $d\theta$ is the standard angular-form on $S^{1}$.
\end{example}

\bigskip

So far all compact irreducible $G_{2}$ manifolds are constructed by solving
nonlinear PDE of the $G_{2}$ metric using the implicit function theorem,
including (i) resolving orbifold singularities by Joyce \cite{Joyce G2},
\cite{Joyce SH} and (ii) twisted connected sum by Kovalev \cite{CHNP} and
Corti-Haskins-Nordstrom-Pancini \cite{Ko}.

\bigskip

We remark a useful construction of local $G_{2}$ frames.

\begin{remark}
(Cayley-Dickson construction) \label{Cayley-Dickson} A convenient basis in
$\operatorname{Im}\mathbb{O}$ can be constructed inductively (Lemma A.15 in
\cite{Harvey Lawson}): Given orthogonal unit vectors $v_{1}$ and $v_{2}$, we
let $v_{3}:=v_{1}\times v_{2}$; Then we take any unit vector $v_{4}\perp
$span$\left\{  v_{1},v_{2},v_{3}\right\}  $, and let $v_{i+4}:=v_{i}\times
v_{4}$ for $i=1,2,3$. Then $\left\{  v_{i}\right\}  _{i=1}^{7}$ is an
orthonormal basis of $\operatorname{Im}\mathbb{O}$ satisfying the same
$\times$ relation as the standard basis $\left\{  e_{i}\right\}  _{i=1}^{7}$
in $\operatorname{Im}\mathbb{O}$.

This basis $\left\{  v_{i}\right\}  _{i=1}^{7}$ is useful in many local
calculations in \cite{LWZ1} (see also \cite{Akbulut Salur}), where $\left\{
v_{1},v_{2}\right\}  $ span the tangent spaces of a $J$-holomorphic curve.
\end{remark}

\subsection{Instantons (associative submanifolds)\label{instanton}}

A $3$-dimensional submanifold $A$ in a $G_{2}$ manifold $M$ is called an
\textbf{associative submanifold} (or an \textbf{instanton}) if its tangent
space $TA$ is close under $\times$. This notion was introduced by
Harvey-Lawson (\cite{Harvey Lawson}, see also \cite{Lee Leung Instanton
Brane}). The following are two other equivalent conditions for $A$ to be an
associative submanifold: (1) $A$ is calibrated by $\Omega$, namely
$\Omega|_{A}=dv_{A}$; (2) $\tau|_{A}=0\in\Omega^{3}\left(  A,T_{M}%
|_{A}\right)  $ where $\tau\in\Omega^{3}\left(  M,TM\right)  $ is defined by
the following equation%
\begin{equation}
g\left(  \tau\left(  u,v,w\right)  ,z\right)  =\left(  \ast\Omega\right)
\left(  u,w,w,z\right)  \, \label{tau}%
\end{equation}
for any $u,v,w$ and $z\in T_{p}M$, where $\ast\Omega\in\Omega^{4}\left(
M\right)  $ is the Hodge-$\ast$ of $\Omega$. This measurement $\tau|_{A}$ of
associativity is important for perturbing almost instantons to an instanton.

\begin{example}
\label{graph-instanton}\label{ImH}In $\operatorname{Im}\mathbb{O\simeq
}\operatorname{Im}\mathbb{H\oplus H}$, the subspace $\operatorname{Im}%
\mathbb{H\oplus}\left\{  0\right\}  \mathbb{\ }$is an instanton. Explicitly
$\tau$ is ($\left(  5.4\right)  $ in \cite{McLean})%
\begin{align}
\tau &  =\left(  \omega^{256}-\omega^{247}+\omega^{346}-\omega^{357}\right)
\partial_{1}+\left(  \omega^{156}-\omega^{147}-\omega^{345}+\omega
^{367}\right)  \partial_{2}\nonumber\\
&  +\left(  \omega^{245}-\omega^{267}-\omega^{146}-\omega^{157}\right)
\partial_{3}+\left(  \omega^{567}-\omega^{127}+\omega^{136}-\omega
^{235}\right)  \partial_{4}\nonumber\\
&  +\left(  \omega^{126}-\omega^{467}+\omega^{137}+\omega^{234}\right)
\partial_{5}+\left(  \omega^{457}-\omega^{125}-\omega^{134}+\omega
^{237}\right)  \partial_{6}\nonumber\\
&  +\left(  \omega^{124}-\omega^{456}-\omega^{135}-\omega^{236}\right)
\partial_{7}, \label{tau-standard}%
\end{align}
where $\partial_{i}=\frac{\partial}{\partial x^{i}}$. For a map $V=\left(
V^{4},V^{5},V^{6},V^{7}\right)  :\operatorname{Im}\mathbb{H\rightarrow H}$ ,
we let
\[
A=\text{graph}\left(  V\right)  =\left\{  \left(  x_{1},x_{2},x_{3},V\left(
x_{1},x_{2},x_{3}\right)  \right)  |\left(  x_{1},x_{2},x_{3}\right)
\in\operatorname{Im}\mathbb{H}\right\}  .
\]
By $\left(  \ref{tau-standard}\right)  $, the condition $\tau|_{A}=0$ becomes
$V^{\ast}\tau=0$ in coordinates $\left(  x_{1},x_{2},x_{3}\right)  $, and we
see the instanton equation for $A$ is a\emph{\ first order cubic PDE} system
involving product terms $\frac{\partial V^{i}}{\partial x^{1}}\frac{\partial
V^{j}}{\partial x^{2}}\frac{\partial V^{k}}{\partial x^{3}}$.
\end{example}

\begin{example}
For any holomorphic curve $\Sigma$ in a Calabi-Yau manifold $X$, $\Sigma\times
S^{1}$ is an instanton in the $G_{2}$ manifold $X\times S^{1}$.
\end{example}

The \emph{deformations} of an instanton $A$ in a $G_{2}$ manifold $M$ are
governed by a\emph{\ twisted Dirac operator} on its normal bundle $N_{A/M}$,
regarded as a Clifford module over $TA$ with $\times$ as the Clifford multiplication.

\begin{theorem}
\label{deform-instanton}(\cite{McLean}) Infinitesimal deformations of
instantons at $A$ are parameterized by the space of harmonic twisted spinors on
$A$, i.e. kernel of the twisted Dirac operator.
\end{theorem}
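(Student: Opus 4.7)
The plan is to parameterize deformations of $A$ as sections of the normal bundle $N:=N_{A/M}$ via the exponential map, write the associativity condition as a nonlinear operator $F:\Gamma(N)\to\Gamma(N)$ with $F(0)=0$, and identify its linearization $dF_{0}$ with a twisted Dirac operator. Infinitesimal deformations of $A$ are by definition $\ker dF_{0}$, so the entire content of the theorem lies in this identification.

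For $V\in\Gamma(N)$ of small $C^{1}$ norm, set $A_{V}=\{\exp_{p}V_{p}:p\in A\}$. Pulling the associativity condition $\tau|_{A_{V}}=0$ back from $A_{V}$ to $A$ via $\exp V$ yields a section of $N\otimes\Lambda^{3}T^{\ast}A$ (tangential components of $\tau|_{A_{V}}$ correspond to reparameterizations of $A$ and are dropped), which is canonically identified with $\Gamma(N)$ using the induced volume form on $A$; this defines $F(V)$. To compute $F'(0)$ I would use a Cayley--Dickson frame as in Remark~\ref{Cayley-Dickson}: choose a local oriented orthonormal frame $\{v_{1},v_{2},v_{3}\}$ of $TA$ with $v_{3}=v_{1}\times v_{2}$, pick any unit $v_{4}\perp TA$, and set $v_{i+4}=v_{i}\times v_{4}$ so that $\{v_{4},\ldots,v_{7}\}$ frames $N$. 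Differentiating $(\ref{tau-standard})$ along $V$ at $V=0$ and working at a basepoint where $\nabla v_{i}=0$, the result depends only on the normal covariant derivative $\nabla^{N}V$ and not on $V$ itself.

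The key algebraic input is that $N$ carries a natural Clifford module structure over $TA$: for $e\in TA$ and $n\in N$ one has $e\times n\in N$, since for any $u\in TA$, $g(e\times n,u)=\Omega(e,n,u)=-g(e\times u,n)=0$ because $e\times u\in TA$ by associativity. The VCP identity $u\times(u\times v)=-|u|^{2}v+g(u,v)u$ then yields the Clifford relation $(e_{1}\cdot e_{2}+e_{2}\cdot e_{1})n=-2g(e_{1},e_{2})n$ for $e\cdot n:=e\times n$. After regrouping the terms produced in step~2 via the $G_{2}$ identities relating $\Omega$ and $\ast\Omega$, one obtains
\[
F'(0)V=\sum_{i=1}^{3}v_{i}\times\nabla^{N}_{v_{i}}V,
\]
which is precisely the twisted Dirac operator on $N$ viewed as a Clifford module over $TA$. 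Hence $\ker dF_{0}$ is the space of harmonic twisted spinors on $A$.

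The hard part is the bookkeeping in step~2. Deforming $A$ by $V$ simultaneously changes its induced metric, its tangent--normal splitting, and the restriction of $\tau$, each contributing to $dF_{0}$, and one must show these reassemble cleanly into a single first-order Dirac-type operator rather than a generic first-order elliptic one. The Cayley--Dickson frame (so the structure constants of $\times$ are the standard ones) combined with normal coordinates centered at a point (so $\nabla v_{i}=0$ there) eliminates lower-order clutter, reducing the identification with Clifford multiplication to a finite algebraic verification using $(\ref{tau-standard})$.
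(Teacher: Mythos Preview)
The paper does not supply its own proof of this theorem; it is quoted from McLean \cite{McLean} and stated as background. Your sketch is essentially McLean's original argument: parameterize nearby submanifolds by normal sections via $\exp$, pull back $\tau$, and identify the linearization at $V=0$ with the twisted Dirac operator $\sum_{i} v_i\times\nabla^{\perp}_{v_i}$ on $N_{A/M}$, using that $\times$ makes $N_{A/M}$ into a Clifford module over $TA$. The Clifford-module verification you give (that $e\times n\in N$ for $e\in TA$, $n\in N$, and that the polarized identity $e_1\times(e_2\times n)+e_2\times(e_1\times n)=-2g(e_1,e_2)n$ holds) is correct.

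One point deserves to be made explicit. You assert that, working at a basepoint with $\nabla v_i=0$, the linearization depends only on $\nabla^{\perp}V$ and not on $V$ itself. Choosing normal coordinates at a point does not by itself kill a zeroth-order term; it only simplifies the computation there. What actually forces the zeroth-order term to vanish is that $\tau$ is parallel, since $\ast\Omega$ is parallel on a genuine $G_{2}$-manifold. With $\nabla\tau=0$, the contributions of the form $i_{V}d\omega^{\alpha}$ and $\omega^{\alpha}\otimes\nabla_{V}W_{\alpha}$ in the generalized Cartan formula (cf.\ the paper's own computation $(\ref{vec-form-derivative})$, carried out for an \emph{almost} instanton) cancel, leaving exactly the symbol term $\sum v_i\times\nabla^{\perp}_{v_i}V$. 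You should invoke $\nabla\Omega=0$ at this step rather than leave it implicit in the choice of frame.
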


\begin{example}
For the instanton $\operatorname{Im}\mathbb{H\oplus}\left\{  0\right\}
\subset\operatorname{Im}\mathbb{O\,\ }$in Example \ref{ImH}, regarding
sections $V=V^{4}+\mathbf{i}V^{5}+\mathbf{j}V^{6}+\mathbf{k}V^{7}$ in its
normal bundle as $\mathbb{H}$-valued functions, then the twisted Dirac
operator is $\mathcal{D}=\mathbf{i}\nabla_{1}+\mathbf{j}\nabla_{2}%
+\mathbf{k}\nabla_{3}$, and%
\begin{align}
\mathcal{D}V  &  =-\left(  V_{1}^{5}+V_{2}^{6}+V_{3}^{7}\right)
+\mathbf{i}\left(  V_{1}^{4}+V_{3}^{6}-V_{2}^{7}\right) \nonumber\\
&  +\mathbf{j}\left(  V_{2}^{4}-V_{3}^{5}+V_{1}^{7}\right)  +\mathbf{k}\left(
V_{3}^{4}+V_{2}^{5}-V_{1}^{6}\right)  . \label{t-Dirac}%
\end{align}

\end{example}

\begin{remark}
Constructing instantons in $G_{2}$ manifolds in general is difficult, partly
because the deformation theory can be obstructed. Our main theorem in
\cite{LWZ1} provides a construction of instantons from $J$-holomorphic curves
in coassociative submanifolds. Other constructions are in \cite{CHNP}.
\end{remark}

\subsection{Boundary value problem on coassociative
submanifolds\label{bdry-value-problem}}

Coassociative submanifolds are analogues of Lagrangian submanifolds in
symplectic geometry. \newline

\begin{definition}
A $4$-dimensional submanifold $C$ in a $G_{2}$ manifold $\left(
M,g,\Omega\right)  $ is called coassociative if $\Omega|_{C}=0$. Equivalently,
$C$ is calibrated by $\ast\Omega$.
\end{definition}

\begin{example}
When $M=X\times S^{1}$ is a product with $X$ a Calabi-Yau threefold, then (i)
$C=L\times S^{1}$ is coassociative if and only if $L\subset X$ is a special
Lagrangian submanifold $L$ of phase $\pi/2$ and (ii) $C=S\times\left\{
\theta\right\}  $ is coassociative if and only if $S\subset X$ is a complex surface.
\end{example}

Deformations of a coassociative submanifold $C$ is studied by McLean
\cite{McLean}: Infinitesimally, they are parameterized by $H_{+}^{2}\left(
C\right)  $, the space of self-dual harmonic $2$-forms on $C$, and they are
always unobstructed.

Finding instantons with boundaries on coassociative submanifolds is an
elliptic problem, similar to finding $J$-holomorphic curves with boundaries on
Lagrangian submanifolds in symplectic geometry.

\begin{theorem}
(Theorem 4.2, \cite{GayetWitt})The linearization of the instanton equation on
an instanton $A$ with boundaries $\Sigma$ lying on a coassociative submanifold
$C$ is an elliptic Fredholm operator, with the Fredholm index given by%
\[
\int_{\Sigma}c_{1}\left(  N_{\Sigma/C}\right)  +1-g\left(  \Sigma\right)  .
\]

\end{theorem}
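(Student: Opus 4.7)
The plan is to formulate the statement as an elliptic boundary value problem for a Dirac operator, verify the Lopatinskii--Shapiro condition explicitly in a Cayley--Dickson frame, and then compute the Fredholm index by reducing to Riemann--Roch on $\Sigma$. By McLean's theorem (Theorem~\ref{deform-instanton}), the linearization of the associative condition at $A$ is a twisted Dirac operator $\mathcal{D}$ on $\Gamma(N_{A/M})$, where the rank-$4$ normal bundle carries a Clifford action of $TA$ via $\times$. A first-order deformation $V$ preserves $\partial A \subset C$ precisely when $V|_\Sigma$ lies in $TC|_\Sigma$; since $T_pA \cap T_pC = T_p\Sigma$ along $\Sigma$, this is equivalent to the local algebraic boundary condition $V|_\Sigma \in N_{\Sigma/C}$, a rank-$2$ subbundle of $N_{A/M}|_\Sigma$.

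Ellipticity is then verified by a direct computation in a Cayley--Dickson adapted frame (Remark~\ref{Cayley-Dickson}). At $p\in\Sigma$, pick $e_1,e_2\in T_p\Sigma$, set $e_3:=e_1\times e_2=\nu$ (the inward normal to $\Sigma$ in $A$), pick a unit $e_4\in N_{\Sigma/C,p}$, and let $e_5:=e_1\times e_4$, $e_6:=e_2\times e_4$, $e_7:=e_3\times e_4$. The coassociative condition $\Omega|_{T_pC}=0$ forces $T_pC=\mathrm{span}\{e_1,e_2,e_4,e_5\}$, hence $N_{\Sigma/C,p}=\mathrm{span}\{e_4,e_5\}$. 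Under the quaternionic model $N_{A/M,p}\cong\mathbb{H}$ of Example~\ref{ImH}, the subbundle $N_{\Sigma/C,p}$ corresponds to $\mathrm{span}_{\mathbb{R}}\{1,\mathbf{i}\}$ and $\mathcal{D}$ becomes $\mathbf{i}\partial_1+\mathbf{j}\partial_2+\mathbf{k}\partial_3$. Freezing coefficients and Fourier-transforming in $(x_2,x_3)$, the Lopatinskii--Shapiro condition reduces to the statement that for every nonzero $(\xi_2,\xi_3)$, the space $E_-\subset\mathbb{H}\otimes\mathbb{C}$ of exponentially decaying solutions of the resulting ODE meets $N_{\Sigma/C,p}\otimes\mathbb{C}$ only in $0$; this is a short linear-algebra computation.

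For the index, I would exploit topological invariance and deform to a product model $A=\Sigma\times[0,1]$ inside a product $G_2$ geometry. Here $N_{A/M}|_A$ pulls back from $\Sigma$ and splits as $N_{\Sigma/C}\oplus E$. The self-dual harmonic $2$-form $\omega$ on $C$ used to construct the family $C_t$ endows $N_{\Sigma/C}$ with a complex structure---the same $J$ for which $\Sigma$ is a $J$-holomorphic curve---so $N_{\Sigma/C}$ becomes a complex line bundle $L\to\Sigma$ of degree $\int_\Sigma c_1(N_{\Sigma/C})$. A direct computation then identifies $\mathcal{D}$ with the chosen boundary projection with the Cauchy--Riemann operator $\bar\partial_L$ on $L$ (the contribution of $E$ is killed by the elliptic boundary projection, up to a term of index zero). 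Riemann--Roch yields
\[
\mathrm{index}\,\mathcal{D} \;=\; \mathrm{index}\,\bar\partial_L \;=\; \deg L + 1 - g(\Sigma) \;=\; \int_\Sigma c_1(N_{\Sigma/C}) + 1 - g(\Sigma).
\]

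The main obstacle lies in aligning three pieces of algebra along $\Sigma$---the $G_2$ cross product on $TM$, the coassociative splitting $TC|_\Sigma = T\Sigma\oplus N_{\Sigma/C}$, and the Clifford action of $\nu$ on $N_{A/M}|_\Sigma$---so that $N_{\Sigma/C}$ is simultaneously an elliptic boundary subbundle (Lopatinskii--Shapiro step) and carries the complex structure that makes the reduction to $\bar\partial_L$ canonical (index step). The Cayley--Dickson basis of Remark~\ref{Cayley-Dickson} is the right computational tool, but careful bookkeeping through the octonion multiplication table and the several rank-$2$ subbundles of $N_{A/M}|_\Sigma$ is needed.
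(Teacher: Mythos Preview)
The paper does not give its own proof of this statement; it is quoted as Theorem~4.2 of Gayet--Witt \cite{GayetWitt} and used as background. Your overall plan---verify the Lopatinskii--Shapiro condition for the twisted Dirac operator with boundary values in $N_{\Sigma/C}$, then compute the index via Riemann--Roch---is indeed the strategy of \cite{GayetWitt}, and the paper's discussion immediately following the theorem (the $J_n$-complex structure on $N_{\Sigma/C}$ and the isomorphism $N_{\Sigma/C}\otimes_\mathbb{C}\wedge^{0,1}_\mathbb{C}(T^\ast\Sigma)\simeq N^\ast_{\langle C,n\rangle/M}$) records exactly the algebra needed for the index step.

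There is, however, a concrete error in your Cayley--Dickson bookkeeping that breaks both steps as written. You assert that coassociativity forces $T_pC=\mathrm{span}\{e_1,e_2,e_4,e_5\}$ with $e_5=e_1\times e_4$. But $\Omega|_{T_pC}=0$ means that for any $u,v\in T_pC$ one has $g(u\times v,w)=\Omega(u,v,w)=0$ for all $w\in T_pC$, i.e.\ $u\times v\in N_{C/M}$. Since you chose $e_1,e_4\in T_pC$, this gives $e_5=e_1\times e_4\in N_{C/M}$, not $T_pC$; likewise $e_6=e_2\times e_4\in N_{C/M}$, and $\Omega(e_1,e_2,\cdot)=g(e_3,\cdot)$ forces $e_3\perp T_pC$. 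The only remaining direction is $e_7=e_3\times e_4=J_n e_4$, so in your frame $N_{\Sigma/C}=\mathrm{span}\{e_4,e_7\}$, which in the quaternion model is $\mathrm{span}_\mathbb{R}\{1,\mathbf{k}\}$, not $\mathrm{span}_\mathbb{R}\{1,\mathbf{i}\}$. (Equivalently: $N_{\Sigma/C}$ is the $J_\nu$-complex line through $e_4$, and you took $\nu=e_3$, so the partner of $e_4$ is $e_3\times e_4$.) Your Lopatinskii--Shapiro check and the identification of the boundary condition with a $\bar\partial$-problem must be redone with this correction. Separately, your index paragraph imports a family $C_t$ and a self-dual $2$-form $\omega$ that are not part of the hypotheses here; the complex structure on $N_{\Sigma/C}$ comes intrinsically from $J_n=n\times$ with $n$ the unit inward normal of $\Sigma$ in $A$, and it is the isomorphism quoted after the theorem that pairs the two rank-$2$ summands of $N_{A/M}|_\Sigma$ so that the boundary problem reduces cleanly to $\bar\partial$ on $N_{\Sigma/C}$.
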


We have the following orthogonal decomposition of $TM$ along $\Sigma$,%

\[
TM|_{\Sigma}=\mathbb{R}\left\langle n\right\rangle \oplus T\Sigma\oplus
N_{\Sigma/C}\oplus N_{\left\langle C,n\right\rangle /M}|_{\Sigma}\text{,}%
\]
where $n$ be the unit inward normal vector field of $\Sigma$ in $A$. We call
$N_{\Sigma/C}$ the \emph{\textquotedblleft intrinsic\textquotedblright} normal
bundle of $\Sigma$ in $C$ and $N_{\left\langle C,n\right\rangle /M}$ the
\emph{\textquotedblleft extrinsic\textquotedblright} normal bundle defined as
the orthogonal complement of $TC\oplus\left\langle n\right\rangle $ in $TM$.
In \cite{GayetWitt}, Gayet-Witt showed that (1) $T\Sigma$, $N_{\Sigma/C}$ and
$N_{\left\langle C,n\right\rangle /M}|_{\Sigma}$ are complex line bundles with
respect to the almost complex structure $J_{n}:=$ $n\times$ and (2)
$N_{\Sigma/C}\otimes_{\mathbb{C}}\wedge_{\mathbb{C}}^{0,1}\left(  T^{\ast
}\Sigma\right)  \simeq N_{\left\langle C,n\right\rangle /M}^{\ast}$ as complex
line bundles over $\Sigma$ which is obtained by changing the tensor product
$\otimes$ to the vector cross product $\times$, and used metric to identify
conjugate bundle with dual bundle. This relationship between intrinsic and
extrinsic geometry of $\Sigma$ in $C$ is needed (\cite{LWZ1}) in order to show
two natural Dirac operators on $\Sigma$ agree which is needed in section
\ref{2Dirac} for the proof of our main theorem.

\bigskip

The following table gives an interesting comparison between $G_{2}$-geometry
and symplectic geometry.%

\[%
\begin{tabular}
[c]{|l|l|}\hline
$G_{2}$\textbf{\ manifold }$%
\begin{array}
[c]{c}%
\\
\end{array}
$ & \textbf{Symplectic manifold }\\\hline
nondegenerate $3$-form $\Omega$, $d\Omega=0%
\begin{array}
[c]{c}%
\\
\end{array}
$ & nondegenerate $2$-form $\omega$, $d\omega=0$\\\hline
vector cross product $\times%
\begin{array}
[c]{c}%
\\
\end{array}
$ & almost complex structure $J$\\\hline
$\Omega\left(  u,v,w\right)  =g\left(  u\times v,w\right)
\begin{array}
[c]{c}%
\\
\end{array}
$ & $\omega\left(  u,v\right)  =g\left(  Ju,v\right)  $\\\hline
$%
\begin{array}
[c]{c}%
\text{instanton }A\\
TA\text{ preserved by}\times\\
A\text{ calibrated by }\Omega
\end{array}
$ & $%
\begin{array}
[c]{c}%
J\text{-holomorphic curve }\Sigma\\
T\Sigma\text{ preserved by }J\\
\Sigma\text{ calibrated by }\omega
\end{array}
\ $\\\hline
$%
\begin{array}
[c]{c}%
\text{coassociative submanifold }C\\
\Omega|_{C}=0\text{ and }\dim C=4
\end{array}
$ & $%
\begin{array}
[c]{c}%
\text{Lagrangian submanifold }L\\
\omega|_{L}=0\text{ and }\dim L=\frac{1}{2}\dim X
\end{array}
$\\\hline
\end{tabular}
\]

\subsection{Intersection theory?\label{intersect}}

Intersection theory of Lagrangian submanifolds is an essential part of
symplectic geometry. By counting the number of holomorphic disks bounding
intersecting Lagrangian submanifolds, Floer and others defined the celebrated
Floer homology theory. It plays an important role in mirror symmetry for
Calabi-Yau manifolds and string theory in physics.

In M-theory, Calabi-Yau threefolds are replaced by $G_{2}$-manifolds. The
analogs of holomorphic disks (resp. Lagrangian submanifolds) are instantons or
associative submanifolds (resp. coassociative submanifolds or branes) in $M$
\cite{Lee Leung Instanton Brane}. The problem of counting instantons has been
discussed by many physicists. For example Harvey and Moore \cite{Harvey Moore}
discussed the mirror symmetry aspects of it; Aganagic and Vafa \cite{Mina
Vafa} related it to the open Gromov-Witten invariants for local Calabi-Yau
threefolds; Beasley and Witten \cite{BeasleyWitten} argued that one should
count instantons using the Euler characteristic of their moduli spaces.

On the mathematical side, the compactness issues of the moduli of instantons
is a very challenging problem because the bubbling-off phenomena of
($3$-dimensional) instantons is not well understood. This makes it very
difficult to define an honest invariant by counting instantons. Note the
Fredholm theory for instantons with coassociative boundary conditions has been
set up \cite{GayetWitt}.

In symplectic geometry, Fukaya and Oh \cite{Fukaya Oh} considered two
\emph{nearby} Lagrangian submanifolds $L$ and $L_{t}$, where $L_{t}$ is the
graph of a closed $1$-form $\alpha$ on $L$ scaled by a small $t$, then
holomorphic disks bounding them with \emph{small volume} is closely related to
gradient flow lines of $\alpha$. They actually deal with $J$-holomorphic
polygons bounding $k$ Lagrangians and need to smooth vertex singularities on
gradient flow trees when $k\geq3$. For simplicity we only state their result
for the $k=2$ case here.

Given a closed $1$-form $\alpha$ on $L$, $L_{t}:=$graph$\left(  t\alpha
\right)  $ is a Lagrangian submanifold in $T^{\ast}L$ near the zero section
$L$. By Weinstein's neighborhood theorem, a small tubular neighborhood of a
Lagrangian $L$ in a symplectic manifold can be identified as a tubular
neighborhood of $L$ in $T^{\ast}L$.

\begin{theorem}
(\cite{Fukaya Oh}) For any compact Riemannian manifold $\left(  L,g\right)  $,
$T^{\ast}L$ has a natural almost K\"{a}hler structure $\left(  \omega
,J_{g}\right)  $.\ Let $L_{t}:=$graph$\left(  t\alpha\right)  $ be a
Lagrangian submanifold in $T^{\ast}L$ for a closed $1$-form $\alpha$ on $L$.
There is a constant $\varepsilon_{0}>0$, such that for any $t\in
(0,\varepsilon_{0}]$, there is a $1$-$1$ correspondence between $J_{g}%
$-holomorphic curves bounding $L\cup L_{t}$ and gradient flow lines of
$\alpha$ on $L$.
\end{theorem}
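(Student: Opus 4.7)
The plan is to carry out an adiabatic limit / singular perturbation analysis as $t \to 0$, exploiting the fact that $L_{t}$ collapses onto $L$. First I would fix coordinates: on a tubular neighborhood of the zero section in $T^{\ast}L$, use local coordinates $(x,p)$ with $x\in L$ and $p$ a fiber coordinate, and write the natural almost K\"{a}hler structure $(\omega,J_{g})$ via the Levi-Civita connection of $g$. Recall that $T(T^{\ast}L)$ splits as horizontal $\oplus$ vertical $\cong TL\oplus T^{\ast}L$, and $J_{g}$ interchanges these two summands using the metric. The Lagrangians $L$ and $L_{t}=\mathrm{graph}(t\alpha)$ then sit as the zero section and a graph displaced by $O(t)$ in the fiber direction.

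Next I would rescale in the fiber direction by $1/t$: set $\tilde{p}=p/t$, so $L_{t}$ becomes the graph of $\alpha$ in the rescaled coordinates, independent of $t$. A holomorphic strip $u=(x(\tau,s),p(\tau,s))$ with $u(\tau,0)\in L$, $u(\tau,1)\in L_{t}$, satisfies $\partial_{s}u+J_{g}(u)\partial_{\tau}u=0$. In the rescaled variables one finds, after Taylor expanding the metric in $p$, that the equation has the schematic form
\[
\partial_{s}x - g^{-1}\tilde{p} = O(t),\qquad \partial_{s}\tilde{p} + \nabla_{x}\text{(something)} = O(t),
\]
together with the boundary conditions $\tilde{p}(\tau,0)=0$ and $\tilde{p}(\tau,1)=\alpha(x(\tau,1))$. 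In the limit $t=0$, $\tilde{p}$ is forced to be linear in $s$, $\tilde{p}=s\,\alpha(x)$, and the remaining equation on $x(\tau)$ is precisely the gradient flow equation of $\alpha$ with respect to $g$. This identifies the $t\to 0$ model of the problem with Morse/gradient-flow theory on $L$.

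To upgrade this formal limit into a bijection, I would run an implicit-function-theorem scheme in both directions. Given a gradient flow line $\gamma$ of $\alpha$, construct an approximate $J_{g}$-holomorphic strip $u_{\gamma,t}(\tau,s):=(\gamma(\tau),\,st\,\alpha(\gamma(\tau)))$ in the unrescaled picture, verify that its Cauchy-Riemann defect is $O(t^{2})$ in a suitable Sobolev/H\"{o}lder norm, and then perturb it to a genuine holomorphic strip by Newton iteration. This requires a right inverse of the linearized Cauchy-Riemann operator along $u_{\gamma,t}$ with Lagrangian boundary conditions, whose bound is uniform in $t$; the key computation is that this linearization, after the same fiber rescaling, limits to the linearization of the gradient flow equation, whose invertibility is standard from Morse theory. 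Conversely, given a holomorphic strip of sufficiently small area bounding $L\cup L_{t}$, a monotonicity / mean value estimate forces it into a tubular neighborhood of $L$ of size $O(t)$, so the rescaling applies and the strip must be $C^{0}$-close to some approximate solution $u_{\gamma,t}$; uniqueness in the contraction argument then matches it with a unique gradient flow line.

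The main obstacle, as the authors emphasize in their remarks about Proposition 6.1 of \cite{Fukaya Oh}, is the loss of uniform ellipticity caused by the collapse of the domain strip $[0,1]\times\mathbb{R}$ relative to the thinness of $L_{t}$: the linearized operator acquires one rapid fiber direction and one slow base direction, so a naive Sobolev right inverse blows up as $t\to 0$. Overcoming this requires splitting variations into a horizontal $TL$ part and a vertical $T^{\ast}L$ part, using $t$-weighted norms that treat the two parts on their natural scales, and proving separate estimates on the $L$-direction (which is controlled by the Morse-theoretic linearization of the gradient flow) and on the fiber direction (which is controlled by the Dirichlet-type boundary value problem in $s$). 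Balancing these two estimates to get a uniform inverse bound, and then closing the Newton iteration inside the small ball where the quadratic error dominates, is the technical heart of the argument.
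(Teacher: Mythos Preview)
The paper does not contain a proof of this theorem: it is stated purely as background, with the citation \cite{Fukaya Oh}, to motivate the analogous $G_{2}$ result (Theorem~\ref{correspondence}). There is therefore nothing in the paper to compare your proposal against.

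That said, your outline is a faithful sketch of the actual Fukaya--Oh argument: rescale the fiber by $1/t$, identify the $t\to 0$ limit of the Cauchy--Riemann equation with the gradient flow equation, build approximate strips over gradient trajectories, and close up with an implicit function theorem using a uniform right-inverse bound on the linearized operator. Your diagnosis of the main obstacle (loss of uniform ellipticity from the collapsing strip, handled via weighted norms separating horizontal and vertical directions) matches what the present paper highlights in its discussion of Proposition~6.1 of \cite{Fukaya Oh}. One point you should be more careful about in an actual proof is the asymptotic/endpoint behavior: gradient flow lines of a Morse one-form connect zeros of $\alpha$, which are exactly the intersection points $L\cap L_{t}$, and the strips must converge at $\tau\to\pm\infty$ to these intersections; the exponential decay at the ends and the nondegeneracy of the intersection points enter the Fredholm setup and the uniform estimates in an essential way, and your sketch glosses over this.
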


We want to build the following analogue: instantons $A$ bounding $C\cup C_{t}$
have $1$-$1$ correspondence to $J_{n}$-holomorphic curves $\Sigma$ on $C$.%

\[%
\begin{tabular}
[c]{|l|l|}\hline
\textbf{Symplectic manifold }$M%
\begin{array}
[c]{c}%
\\
\end{array}
$ & $G_{2}$\textbf{\ manifold }$M$\\\hline
Lagrangian submanifolds $L$ and $L_{t}%
\begin{array}
[c]{c}%
\\
\end{array}
$ & coassociative submanifolds $C$ and $C_{t}$\\\hline
$J$-holo. curve bounding $L\cup L_{t}%
\begin{array}
[c]{c}%
\\
\end{array}
$ & instanton $A$ bounding $C\cup C_{t}$\\\hline
gradient flow line of $\alpha$ on $L%
\begin{array}
[c]{c}%
\\
\end{array}
$ & $J_{n}$-holomorphic curve $\Sigma$ on $C$\\\hline
\end{tabular}
\
\]
The meaning of $J_{n},C_{t}$ and the precise statment of our result will be
explained in the next section.

\section{Main Theorem: instantons from ${\protect\Large J}$%
-{\protect\Large holomorphic curves\label{thin-instanton}}}

\subsection{Statement of the main theorem\label{statement-main-theorem}}

Let $\mathcal{C}=\cup_{0\leq t\leq\varepsilon}C_{t}$ be a family of
coassociative manifolds $C_{t}$ in a $G_{2}$-manifold $M$, regarded as a
deformation of $C=C_{0}$ along the normal vector field $n:=\frac{dC_{t}}%
{dt}|_{t=0}$. Then \footnote{Here $\iota_{n}$ is the contraction of a form by
the vector field $n$.}$\iota_{n}\Omega$ is a self-dual harmonic $2$-form on
$C$ by McLean's Theorem on deformations of coassociative submanifolds (Section
\ref{bdry-value-problem}). In particular $\omega_{n}:=\iota_{n}\Omega$ defines
a \emph{symplectic structure }on $C\backslash n^{-1}\left(  0\right)  $ as
$\omega_{n}\wedge\omega_{n}=\ast\omega_{n}\wedge\omega_{n}=\left\vert
\omega_{n}\right\vert ^{2}dv_{C}$ is nonzero outside $\left\{  n=0\right\}  $.
Furthermore $J_{n}:=\frac{n}{\left\vert n\right\vert }\times$ defines a
compatible \emph{almost complex structure} on $\left(  C\backslash
n^{-1}\left(  0\right)  ,\omega_{n}\right)  $. When $n$ has no zero, we have
the following main theorem in \cite{LWZ1}

\begin{theorem}
\label{correspondence}Suppose that $\left(  M,\Omega\right)  $\ is a $G_{2}%
$-manifold and $\left\{  C_{t}\right\}  $\ is an one-parameter smooth family
of coassociative submanifolds in $M$. When $\iota_{n}\Omega\in\Omega_{+}%
^{2}\left(  C_{0}\right)  $\ is nonvanishing, then

\begin{enumerate}
\item (Proposition 6) If $\left\{  \mathtt{A}_{t}\right\}  $\ is any
one-parameter family of associative submanifolds (i.e. instantons) in
$M$\ satisfying
\[
\partial\mathtt{A}_{t}\subset C_{t}\cup C_{0},\text{ }\lim_{t\rightarrow
0}\mathtt{A}_{t}\cap C_{0}=\Sigma_{0}\text{ in the }C^{1}\text{-topology,}%
\]
then $\Sigma_{0}$\ is a $J_{n}$-holomorphic curve in $C_{0}$. \

\item (Theorem 24) Conversely, every regular $J_{n}$-holomorphic curve
$\Sigma_{0}$ (namely those for which the linearization of $\overline{\partial
}_{J_{n}}$ on $\Sigma_{0}$ is surjective) in $C_{0}\ $is the limit of a family
of associative submanifolds $\mathtt{A}_{t}$'s as described above.
\end{enumerate}
\end{theorem}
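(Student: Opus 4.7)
The plan is to handle the two parts separately. Part 1 is essentially an algebraic limit argument: near any point $p\in\Sigma_0\subset C_0$, choose a local Cayley-Dickson frame (Remark \ref{Cayley-Dickson}) so that the ``thinning'' direction of $\mathtt{A}_t$ converges, as $t\to 0$, to the unit normal $n$ of $C_0$ in $M$. By the assumed $C^1$ convergence, $T_p\mathtt{A}_0 = \mathbb{R}\langle n\rangle \oplus T_p\Sigma_0$, and the associative closure of tangent spaces passes to the limit. For any $e \in T_p\Sigma_0$ one has $n\times e = J_n(e)\perp n$, so $J_n(e)\in T_p\mathtt{A}_0\cap n^{\perp} = T_p\Sigma_0$, which shows that $\Sigma_0$ is $J_n$-holomorphic.

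For Part 2 the plan is a gluing / Newton-iteration construction. Stage (a), the approximate instanton: flow $\Sigma_0$ along the infinitesimal deformation vector $n$ of the family $\{C_t\}$ to produce $\Sigma_s\subset C_s$ for $s\in[0,t]$, and let $\mathtt{A}_t^{\mathrm{app}}$ be the resulting thin $3$-manifold, essentially $[0,t]\times\Sigma_0$, ruled by geodesic segments normal to $C_0$. Using \eqref{tau-standard} together with $\overline{\partial}_{J_n}\Sigma_0=0$, one checks that $\tau|_{\mathtt{A}_t^{\mathrm{app}}}$ is of sufficiently small order in $t$ for Newton's method to be applicable.

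Stage (b), the linearization: parameterize nearby submanifolds as graphs of sections $V$ of $N_{\mathtt{A}_t^{\mathrm{app}}/M}$ subject to the coassociative boundary constraints $V|_{\{0\}\times\Sigma_0}\in TC_0$ and $V|_{\{t\}\times\Sigma_0}\in TC_t$. By Theorem \ref{deform-instanton} the linearized instanton equation is a twisted Dirac operator $\mathcal{D}_t$. Stage (c), the uniform right inverse: rescale the thin interval $[0,t]\mapsto[0,1]$ so the domain no longer collapses, and study the rescaled $\mathcal{D}_t$. Its $t\to 0$ limit decouples into (i) a boundary-value ODE in the normal direction, solvable using the coassociative boundary condition together with the intrinsic/extrinsic normal bundle identification of Gayet-Witt recalled in Section \ref{bdry-value-problem}, and (ii) the linearized $\overline{\partial}_{J_n}$ on $\Sigma_0$, surjective by the regularity hypothesis on $\Sigma_0$. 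This yields a uniformly bounded right inverse $Q_t$ in $C^{1,\alpha}$; the Schauder rather than $W^{1,p}$ framework is forced by the cubic nature of the instanton equation (item 2 of the introduction). Stage (d): since $C^{1,\alpha}$ is a Banach algebra, the nonlinear remainder is quadratic in $V$, and a contraction mapping argument starting from $V_0 = -Q_t\,\tau|_{\mathtt{A}_t^{\mathrm{app}}}$ produces the true instanton $\mathtt{A}_t$, which by construction converges to $\Sigma_0$ as $t\to 0$.

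The hardest step is (c): producing a right inverse whose norm does not blow up as the domain collapses. The operator $\mathcal{D}_t$ loses uniform ellipticity in the thinning direction, and the weak coupling between $u\in\mathbb{S}^+$ and $v\in\mathbb{S}^-$ noted in item 3 of the introduction makes the $C^{1,\alpha}$ Schauder estimates near the two coassociative boundary components genuinely subtle. The key technical input will be a careful choice of Cayley-Dickson normal frame in which the rescaled $\mathcal{D}_t$ agrees, to leading order, with the block operator whose tangential part is exactly the Cauchy-Riemann operator of $(C_0,J_n)$ restricted to $\Sigma_0$; once that identification is in place, surjectivity of the limit operator, combined with an open-mapping / perturbation argument, delivers the required uniform bound and completes the Newton iteration.
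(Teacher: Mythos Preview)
Your overall architecture for Part 2 --- build an approximate instanton, linearize, invert, bound the quadratic remainder, run Newton --- is exactly the paper's, and your Part 1 argument via the limiting associative $3$-plane $\mathbb{R}\langle n\rangle\oplus T_p\Sigma_0$ is correct and in the same spirit as Proposition~6 of \cite{LWZ1}.

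The substantive gap is Stage~(c). Your plan is to rescale $[0,t]\to[0,1]$, take the $t\to 0$ limit of the resulting operator, and then use an open-mapping/perturbation argument to deduce a \emph{uniform} right inverse in $C^{1,\alpha}$. This does not work as stated. After the substitution $x_1=ty_1$ the model operator becomes
\[
\mathcal{D}_t \;=\; e_1\cdot \tfrac{1}{t}\,h^{-1/2}\partial_{y_1}+\bar\partial,
\]
which has no limit as $t\to 0$; if instead you multiply by $t$, the limit $e_1\cdot h^{-1/2}\partial_{y_1}$ has infinite-dimensional kernel (all sections constant in $y_1$) and you lose the $\bar\partial$ information entirely. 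So there is no well-defined ``limit operator'' to which a perturbation argument applies, and the claimed decoupling into an ODE plus $\bar\partial_{J_n}$ is not a limit in any operator topology that would give you a uniform $C^{1,\alpha}$ bound on $Q_t$.

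The paper's route in Section~\ref{D-inverse} is genuinely different and does \emph{not} yield a uniform bound. It keeps the collapsing domain $\mathtt{A}_\varepsilon=[0,\varepsilon]\times\Sigma$ and proceeds in layers: an $L^2$ bound from the first-eigenvalue estimate (Theorem~\ref{1ev}), then a uniform $W^{1,p}$ bound obtained by \emph{periodically reflecting} $\mathtt{A}_\varepsilon$ to $\mathtt{A}_{k(\varepsilon)\varepsilon}$ with $k(\varepsilon)\varepsilon\in[1/2,3/2]$ (this is where the coassociative boundary condition is used: odd extension of $v$, even extension of $u$). The reflection trick fails in $C^{1,\alpha}$ because the extended $\mathcal{D}V$ is generally discontinuous across $\varepsilon\mathbb{Z}\times\Sigma$; instead the paper splits $W=(w_1,w_2)$ into $(0,w_2)$ and $(w_1,0)$, handles the hard $(w_1,0)$ case by homogenizing $u$ with a cut-off and by an averaging argument showing $\bar u(z)\equiv 0$ (hence $u$ vanishes somewhere on each segment), and assembles an $\varepsilon$-\emph{dependent} bound
\[
\|V\|_{C^{1,\alpha}_-}\;\le\; C\,\varepsilon^{-(3/p+2\alpha)}\,\|\mathcal{D}V\|_{C^\alpha}.
\]
This blows up as $\varepsilon\to 0$, but is compensated by the error estimate $\|F_\varepsilon(0)\|_{C^\alpha}\le C\varepsilon^{1-\alpha}$, so that for $\alpha$ and $1/p$ small the implicit function theorem closes. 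Your proposal should either replace the rescaling step with this reflection/homogenization/averaging scheme, or supply a genuine adiabatic analysis (splitting into fibrewise Fourier modes, treating the zero mode via $\bar\partial$ and the nonzero modes via the ODE) with explicit $\varepsilon$-weights; the one-line ``limit plus perturbation'' is not enough.
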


Our results are similar to those in Fukaya-Oh \cite{Fukaya Oh} and the proofs
also share some similarities: relating the Fredholm regular property of higher
dimensional linearized instanton equations to lower dimensional ones;
necessity to deal with the lack of uniform ellipticity as the domain collapses
when $\varepsilon\rightarrow0$; using the periodic reflection technique to
\textquotedblleft thicken\textquotedblright\ the collapsing domain to achieve
a uniform right inverse estimate in the $W^{1,p}$ setting.

However the proof in our case has more difficulties than those needed in the
$k=2$ case of \cite{Fukaya Oh}, as explained in the introduction. For $k\geq
3$, \cite{Fukaya Oh} contains difficulties we have not encountered here: to
find the local models of the singularities of degenerating $J$-holomorphic
polygons and resolve them.

\begin{remark}
Given any Riemann surface $\Sigma\subset M,$ it can always be thickened to a
instanton by the Cartan-K\"{a}hler theory (\cite{Harvey Lawson}%
,\cite{GayetWitt}). However its boundary may not lie inside any coassociative
submanifold (see \cite{GayetWitt}). In our case, we produce an instanton
$A_{\varepsilon}$ with boundary in the coassociative submanifold $C\cup
C_{\varepsilon}$, but $\partial A_{\varepsilon}\cap C$ is only close but not
equal to $\Sigma$.
\end{remark}

\subsection{Main ingredients of the proof\label{proof}}

\subsubsection{Formulating the instanton equation near an almost
instanton\label{nonlinear_instanton_eq}}

We first produce an almost instanton with boundaries on $C_{0}\cup
C_{\varepsilon}$. Let $\varphi:\left[  0,\varepsilon\right]  \times
C\rightarrow M$ be a parametrization of the family of coassociative
submanifolds $\left\{  C_{t}\right\}  _{0\leq t\leq\varepsilon}$. Under the
assumptions that $n=\frac{dC_{t}}{dt}|_{t=0}$ is nonvanishing and $\Sigma$ is
a $J_{n}$-holomorphic curve in $C=C_{0}$, we define
\[
A_{\varepsilon}:=\left[  0,\varepsilon\right]  \times\Sigma\text{, and
}A_{\varepsilon}^{\prime}=\varphi\left(  A_{\varepsilon}\right)  \subset M,
\]
then $A_{\varepsilon}^{\prime}$ is an \emph{almost instanton} with $\partial
A_{\varepsilon}^{\prime}\subset C_{0}\cup C_{\varepsilon}$ in the sense that
$\left\vert \tau|_{A_{\varepsilon}^{\prime}}\right\vert _{C^{0}}$ is small.
Recall that a $3$-dimensional submanifold $A\subset M$ is an instanton if and
only if $\tau|_{A}=0$, where $\tau\in\Omega^{3}\left(  M,TM\right)  $ is
defined in $\left(  \ref{tau}\right)  $. The reason for the smallness is that,
at $p\in$ $\varphi\left(  \left\{  0\right\}  \times\Sigma\right)  \subset$
$A_{\varepsilon}^{\prime}$, $T_{p}A_{\varepsilon}^{\prime}$ is associative by
the $J_{n}$-holomorphic property of $\Sigma$ so $\tau|_{T_{p}A_{\varepsilon
}^{\prime}}=0$, and any point $q$ on $A_{\varepsilon}^{\prime}$ has
$\varepsilon$-order distance to $\varphi\left(  \left\{  0\right\}
\times\Sigma\right)  $ while $\tau|_{T_{q}A_{\varepsilon}^{\prime}}$ smoothly
depends on $q$.

Next we formulate $\tau|_{A}$ as a nonlinear map $F_{\varepsilon}$ on the
space $\Gamma\left(  A_{\varepsilon}^{\prime},N_{A_{\varepsilon}^{\prime}%
/M}\right)  $ of sections of the \emph{normal bundle }$N_{A_{\varepsilon
}^{\prime}/M}$\emph{\ of }$A_{\varepsilon}^{\prime}$, in particular
$F_{\varepsilon}\left(  V\right)  =0$ if and only if%
\begin{equation}
\mathtt{A}_{\varepsilon}\left(  V\right)  :=\left(  \exp V\right)  \left(
\mathtt{A}_{\varepsilon}^{\prime}\right)
\end{equation}
is an instanton where $\exp V:\mathtt{A}_{\varepsilon}^{\prime}\rightarrow M$.

To do this, we let%

\begin{align*}
C^{\alpha}\left(  \mathtt{A}_{\varepsilon}^{\prime},N_{\mathtt{A}%
_{\varepsilon}^{\prime}/M}\right)   &  =\left\{  V\in\Gamma\left(
\mathtt{A}_{\varepsilon}^{\prime},N_{\mathtt{A}_{\varepsilon}^{\prime}%
/M}\right)  |V\text{ }\in C^{\alpha}\right\}  ,\\
C_{-}^{1,\alpha}\left(  \mathtt{A}_{\varepsilon}^{\prime},N_{\mathtt{A}%
_{\varepsilon}^{\prime}/M}\right)   &  =\left\{  V\in\Gamma\left(
\mathtt{A}_{\varepsilon}^{\prime},N_{\mathtt{A}_{\varepsilon}^{\prime}%
/M}\right)  |V\text{ }\in C^{1,\alpha}\text{, }V|_{\partial\mathtt{A}%
_{\varepsilon}^{\prime}}\subset TC_{0}\cup TC_{\varepsilon}\right\}  ,
\end{align*}
(the \textquotedblleft$-$\textquotedblright\ in $C_{-}^{1,\alpha}$ is for the
coassociative boundary condition), and%
\[
F_{\varepsilon}:C_{-}^{1,\alpha}\left(  \mathtt{A}_{\varepsilon}^{\prime
},N_{\mathtt{A}_{\varepsilon}^{\prime}/M}\right)  \rightarrow C^{\alpha
}\left(  \mathtt{A}_{\varepsilon}^{\prime},N_{\mathtt{A}_{\varepsilon}%
^{\prime}/M}\right)  ,
\]
\[
F_{\varepsilon}\left(  V\right)  =\ast_{\mathtt{A}_{\varepsilon}^{\prime}%
}\circ\bot_{\mathtt{A}_{\varepsilon}^{\prime}}\circ\left(  T_{V}\circ\left(
\exp V\right)  ^{\ast}\tau\right)  ,
\]
where

\begin{enumerate}
\item $\left(  \exp V\right)  ^{\ast}$ pulls back the \emph{differential form
part} of the tensor $\tau$

\item $T_{V}:T_{\exp_{p}\left(  tV\right)  }M\rightarrow T_{p}M$ pulls back
the \emph{vector part} of $\tau$ by parallel transport along the geodesic
$\exp_{p}\left(  tV\right)  $

\item $\bot_{\mathtt{A}_{\varepsilon}^{\prime}}:TM|_{\mathtt{A}_{\varepsilon
}^{\prime}}\rightarrow N_{\mathtt{A}_{\varepsilon}^{\prime}/M}$ $\ $is the
orthogonal$\ $projection with respect to $g$

\item $\ast_{\mathtt{A}_{\varepsilon}^{\prime}}:\Omega^{3}\left(
\mathtt{A}_{\varepsilon}^{\prime}\right)  \rightarrow\Omega^{0}\left(
\mathtt{A}_{\varepsilon}^{\prime}\right)  $ \ is the$\ $quotient by the volume
form $dvol_{\mathtt{A}_{\varepsilon}^{\prime}}$
\end{enumerate}

If $\mathtt{A}_{\varepsilon}^{\prime}$ is an almost instanton, then a $G_{2}%
$-linear algebra argument shows that when $\left\Vert V\right\Vert
_{C^{1,\alpha}\left(  A_{\varepsilon}^{\prime},N_{A_{\varepsilon}^{\prime}%
/M}\right)  }\ $is small we have
\[
F_{\varepsilon}\left(  V\right)  =0\text{ }\Longleftrightarrow\mathtt{A}%
_{\varepsilon}\left(  V\right)  \text{ is an instanton.}%
\]
To ensure that $\mathtt{A}_{\varepsilon}\left(  V\right)  $ satisfies the boundary
condition, in the definition of $\exp V$ we actually need to modify the metric
$g$ near $C_{0}\cup C_{\varepsilon}$ to make them totally geodesic, but we
will keep the original metric in $T_{V}$, $\bot_{\mathtt{A}_{\varepsilon
}^{\prime}}$ and $\ast_{\mathtt{A}_{\varepsilon}^{\prime}}$. This modification
will not change the expression of $F_{\varepsilon}^{\prime}\left(  0\right)  $
$\left(  \ref{eq:lin-instanton}\right)  $ (see Remark 10 (1) in \cite{LWZ1}).
So our estimate for $\left\Vert F_{\varepsilon}^{\prime}\left(  0\right)
^{-1}\right\Vert $ is still valid in the new metric.

\subsubsection{Linearizing the instanton equation using a good
frame\label{subsec:linearization}}

We can make $F_{\varepsilon}\left(  V\right)  $ more explicit by using a good
local frame field $\left\{  W_{\alpha}\right\}  _{\alpha=1}^{7}$ near $p\in
A_{\varepsilon}^{\prime}=\mathtt{A}_{\varepsilon}\left(  0\right)  $. Since
$\tau$ is a vector-valued $3$-form, following Einstein's summation convention,
we can write
\[
\tau=\omega^{\alpha}\otimes W_{\alpha}\in\Omega^{3}\left(  M,TM\right)
\text{,}%
\]
where local 3-forms $\left\{  \omega^{\alpha}\right\}  _{\alpha=1}^{7}$ are
determined by $\left\{  W_{\alpha}\right\}  _{\alpha=1}^{7}$. For any
$q=\exp_{p}V\in\mathtt{A}_{\varepsilon}\left(  V\right)  $, we let
\[
F\left(  V\right)  \left(  p\right)  =\left(  \exp V\right)  ^{\ast}%
\omega^{\alpha}\left(  p\right)  \otimes T_{V}W_{\alpha}\left(  q\right)  .
\]
$F\left(  V\right)  $ is the essential part of $F_{\varepsilon}\left(
V\right)  $ as $\ast_{\mathtt{A}_{\varepsilon}^{\prime}}\circ\bot
_{\mathtt{A}_{\varepsilon}^{\prime}}$ is only a projection. It is
\emph{independent }on the choice of frame $\left\{  W_{\alpha}\right\}
_{\alpha=1}^{7}$. To compute $F^{\prime}\left(  0\right)  $, we have the
following \emph{generalized Cartan formula} $\left(  \ref{vec-form-derivative}%
\right)  $: \
\begin{align}
&  F^{\prime}\left(  0\right)  V\left(  p\right) \nonumber\\
&  =L_{V}\omega^{\alpha}\otimes W_{\alpha}+\omega^{\alpha}\otimes\nabla
_{V}W_{\alpha}\nonumber\\
&  =\underset{\text{symbol part}}{\underbrace{d\left(  i_{V}\omega^{\alpha
}\right)  \otimes W_{\alpha}}}+\underset{0\text{-th order part}}%
{\underbrace{i_{V}d\omega^{\alpha}\left(  p\right)  \otimes W_{\alpha}%
+\omega^{\alpha}\otimes\nabla_{V}W_{\alpha}\left(  p\right)  }}%
.\label{vec-form-derivative}\\
&  =d\left(  i_{V}\omega^{\alpha}\right)  \otimes W_{\alpha}\left(  p\right)
+B_{\alpha}\left(  p\right)  V\otimes W_{\alpha}+\omega^{\alpha}\otimes
C_{\alpha}\left(  p\right)  V\nonumber\\
&  =\mathcal{D}V\otimes vol_{A_{\varepsilon}^{\prime}}+E\left(  p\right)
\left(  V\right)  \label{eq:lin-instanton}%
\end{align}
where $B_{\alpha}$, $C_{\alpha}$ and $E$ are certain matrix-valued functions.

\bigskip We require a \emph{\textquotedblleft good\textquotedblright\ }frame
field $\left\{  W_{\alpha}\right\}  _{\alpha=1}^{7}$ to satisfy the following
conditions in any small $\varepsilon$\emph{-ball }around\emph{\ }$p$\emph{:}

\begin{enumerate}
\item $B_{\alpha}$, $C_{\alpha}$ and $E$ are of $\varepsilon$-order in $C^{1}$ norm,

\item $\omega^{a}$'s are $\varepsilon$-close to $\omega^{ijk}$'s in $\left(
\ref{tau-standard}\right)  $ for $\mathbb{R}^{7}$ in $C^{1}$ norm, in the
sense that $\omega^{ijk}$ are replaced by $W_{i}^{\ast}\wedge W_{j}^{\ast
}\wedge W_{k}^{\ast}$, where $W_{i}^{\ast}$ is the dual vector of $W_{i}$,

\item $\mathcal{D}V\left(  p\right)  $ is $\varepsilon$-close to the twisted
Dirac operator $\left(  \ref{t-Dirac}\right)  $ for $\mathbb{R}^{7}$ in
$C^{1}$ norm, in the sense that $\frac{\partial}{\partial x^{i}}$ in $\left(
\ref{t-Dirac}\right)  $ are replaced by $\nabla_{W_{i}}^{\perp}$.
\end{enumerate}

Condition $1$ holds when $\left\{  W_{\alpha}\right\}  _{\alpha=1}^{7}$ is
parallel along the normal bundle directions. Conditions $2$ holds if whenever
$e_{\gamma}=e_{\alpha}\times e_{\beta}$ in $\operatorname{Im}\mathbb{O}$ we
have $\left\Vert W_{\gamma}-W_{\alpha}\times W_{\beta}\right\Vert _{C^{1}%
}=O\left(  \varepsilon\right)  $. Condition $3$ holds if we further have the
\emph{normal covariant derivatives} $\left\Vert \nabla_{W_{i}}^{\perp}%
W_{k}\right\Vert _{C^{1}}=O\left(  \varepsilon\right)  $ assuming that $W_{i}%
$'s span $TA_{\varepsilon}^{\prime}$ and $W_{k}$'s span $N_{A_{\varepsilon
}^{\prime}/M}$.

\bigskip Such a \textit{good }frame $\left\{  W_{\alpha}\right\}  _{\alpha
=1}^{7}$ can be constructed by the Cayley-Dickson construction as explained in
Remark \ref{Cayley-Dickson}. The principal part of the linearized instanton
equation $F^{\prime}\left(  0\right)  V$ on $A_{\varepsilon}^{\prime}$ is the
term $\mathcal{D}V$ in $\left(  \ref{eq:lin-instanton}\right)  $, which is a
first order differential operator with a nice geometric meaning (see the next subsection).

\subsubsection{A simplified model: Dirac operators on thin manifolds}

\bigskip We temporarily leave $G_{2}$ geometry and consider a
(\emph{\textquotedblleft thin\textquotedblright\ }when $\varepsilon$ is small)
$3$-manifold
\[
\mathtt{A}_{\varepsilon}:=\left[  0,\varepsilon\right]  \times\Sigma=\left\{
\left(  x_{1},z:=x_{2}+ix_{3}\right)  \right\}  ,
\]
with the \emph{warped product metric}
\[
g_{\mathtt{A}_{\varepsilon},h}=h\left(  z\right)  dx_{1}^{2}+g_{\Sigma},
\]
where $h\left(  z\right)  =\left\vert n\right\vert ^{2}>0$ and $n=\frac
{dC_{t}}{dt}|_{t=0}$ is the nonvanishing normal vector field on $C_{0}$. This
is a first order approximation of the induced metric on $\mathtt{A}%
_{\varepsilon}^{\prime}\subset M$.

We first consider the geometry of a $J_{n}$-holomorphic curve $\Sigma$ in $C$.
Let $L=N_{\Sigma/C}$ be the normal bundle of $\Sigma$ in $C$, then $L$ is a
Hermitian $J_{n}$-complex line bundle over $\Sigma$ (see Proposition
\ref{Dirac-bdl}). Let $\bar{\partial}=\left(  \overline{\partial}%
,\overline{\partial}^{\ast}\right)  $ be the Dirac operator on the Dolbeault
complex $\Omega_{\mathbb{C}}^{0}\left(  L\right)  \oplus\Omega_{\mathbb{C}%
}^{0,1}\left(  L\right)  $ of the spinor bundle of $\Sigma$%
\[
\mathbb{S}_{\Sigma}=\mathbb{S}_{\Sigma}^{+}\oplus\mathbb{S}_{\Sigma}%
^{-}=L\oplus\wedge_{\mathbb{C}}^{0,1}\left(  L\right)
\]
such that%
\begin{equation}
\Omega_{\mathbb{C}}^{0}\left(  L\right)  \oplus\Omega_{\mathbb{C}}%
^{0,1}\left(  L\right)  \overset{\left(  \overline{\partial},\overline
{\partial}^{\ast}\right)  }{\longrightarrow}\Omega_{\mathbb{C}}^{0,1}\left(
L\right)  \oplus\Omega_{\mathbb{C}}^{0}\left(  L\right)  ,
\label{Dolbeault-Dirac}%
\end{equation}
where $\overline{\partial}:$ $\Omega_{\mathbb{C}}^{0}\left(  L\right)
\rightarrow\Omega_{\mathbb{C}}^{0,1}\left(  L\right)  $ is the \emph{normal
Cauchy-Riemann operator} of $J_{n}$-holomorphic curve $\Sigma$ in $C$, and
$\overline{\partial}^{\ast}:\Omega_{\mathbb{C}}^{0,1}\left(  L\right)
\rightarrow\Omega_{\mathbb{C}}^{0}\left(  L\right)  $ is its adjoint. (In
\cite{LWZ1}, we use the notation $\partial^{+}=-$ $\mathbf{i}\overline
{\partial}^{\ast}$ and $\partial^{-}=\mathbf{i}\overline{\partial}$).

\bigskip

To describe the spinor bundle $\mathbb{S}$ over the $3$-manifold
$\mathtt{A}_{\varepsilon}$, we pullback $\mathbb{S}_{\Sigma}=\mathbb{S}%
_{\Sigma}^{+}\oplus\mathbb{S}_{\Sigma}^{-}$ to $\mathtt{A}_{\varepsilon
}=\left[  0,\varepsilon\right]  \times\Sigma$, and denote it as $\mathbb{S}%
=\mathbb{S}^{+}\oplus\mathbb{S}^{-}$. Let $e_{1}$ be the unit tangent vector
field on $\mathtt{A}_{\varepsilon}$ along $x_{1}$-direction. For $\mathbb{S}$
to be the spinor bundle of $\mathtt{A}_{\varepsilon}$, the Clifford
multiplication with $e_{1}$ should be $\pm i$ on $\mathbb{S}^{\pm}$. To
describe the Dirac operator on $\mathbb{S}$, we define
\[
\mathcal{D}=e_{1}\cdot h^{-\frac{1}{2}}\left(  z\right)  \frac{\partial
}{\partial x_{1}}+\bar{\partial},
\]
where $\bar{\partial}=\left(  \overline{\partial},\overline{\partial}^{\ast
}\right)  $ is the Dirac Dolbeault operator in equation (\ref{Dolbeault-Dirac}).
$\mathcal{D}$ acts on the sections $V=\left(  u,v\right)  $ of $\mathbb{S}%
=\mathbb{S}^{+}\oplus\mathbb{S}^{-}$ over $\mathtt{A}_{\varepsilon}$ with
\emph{local elliptic boundary condition} (in the sense of \cite{BW}):%
\[
v|_{\partial\mathtt{A}_{\varepsilon}}=0.
\]

We can write a local expression of\emph{\ }$\mathcal{D}$. Consider the section
$V=\left(  u,v\right)  $ of $\mathbb{S}$ with $u=V^{4}+\mathbf{i}V^{5}%
\in\mathbb{S}^{+}$ and $v=V^{6}+\mathbf{i}V^{7}\in\mathbb{S}^{-}$ (for
$\mathbb{S}^{\pm}$ are complex line bundles). Then $e_{1}\cdot=\left[
\begin{array}
[c]{cc}%
\mathbf{i} & 0\\
0 & -\mathbf{i}%
\end{array}
\right]  $ and
\begin{align}
\mathcal{D}V  &  =\left[
\begin{array}
[c]{cc}%
\mathbf{i} & 0\\
0 & -\mathbf{i}%
\end{array}
\right]  \left(  h^{-\frac{1}{2}}\left(  z\right)  \frac{\partial}{\partial
x_{1}}+\left[
\begin{array}
[c]{cc}%
0 & \mathbf{i}\partial_{z}\\
\mathbf{i}\bar{\partial}_{z} & 0
\end{array}
\right]  \right)  \left[
\begin{array}
[c]{c}%
u\\
v
\end{array}
\right]  ,\nonumber\\
&  =\left(  h^{-\frac{1}{2}}\left(  z\right)  \frac{\partial u}{\partial
x_{1}}\mathbf{i}-\partial_{z}v\right)  +\left(  -h^{-\frac{1}{2}}\left(
z\right)  \frac{\partial v}{\partial x_{1}}\mathbf{i+}\bar{\partial}%
_{z}u\right)  \cdot\mathbf{j} \label{3mfd-Dirac}%
\end{align}
where $\bar{\partial}_{z}:=\nabla_{2}+\mathbf{i}\nabla_{3}$ and $\partial
_{z}:=\nabla_{2}-\mathbf{i}\nabla_{3}$.

On can check that $\mathcal{D}$ agrees with the linearized instanton equation
on $\left\{  0\right\}  \times\Sigma$, and on A$_{\varepsilon}$ they are very
close (Subsection \ref{exp-like}). This is why we can use $\mathcal{D}$ of the
linear model to study deformations of instantons in $G_{2}$ manifolds. The
precise comparison is in Subsection \ref{2Dirac} and Subsection \ref{exp-like}.

\subsubsection{Key estimates of $\mathcal{D}^{-1}$ of the linear
model\label{D-inverse}}

The most difficult part of \cite{LWZ1} is to derive an explicit $\varepsilon
$-dependent bound of the operator norm of $\mathcal{D}^{-1}:$ $C^{\alpha
}\left(  \mathtt{A}_{\varepsilon},\mathbb{S}\right)  \rightarrow
C_{-}^{1,\alpha}\left(  \mathtt{A}_{\varepsilon},\mathbb{S}\right)  $. This is
nontrivial because we loss uniform ellipticity as $\mathtt{A}_{\varepsilon
}=\left[  0,\varepsilon\right]  \times\Sigma$ collapses to $\Sigma$ and
consequently $\left\Vert \mathcal{D}^{-1}\right\Vert $ may blow up. A good
control of $\left\Vert \mathcal{D}^{-1}\right\Vert $ is crucial for singular
perturbation problems in general. We could achieve this in our case roughly
because of

\begin{enumerate}
\item The Fredholm regularity property of the $J_{n}$-holomorphic curve
$\Sigma$, which supplies the transversality for $\mathcal{D}$,

\item The coassociative boundary condition on $\partial\mathtt{A}%
_{\varepsilon}$, which enables us to periodically reflect $\mathtt{A}%
_{\varepsilon}$ to a bigger domain $\mathtt{A}_{k\left(  \varepsilon\right)
\varepsilon}$ with integer $k\left(  \varepsilon\right)  $ such that $1/2\leq
k\left(  \varepsilon\right)  \varepsilon\leq3/2$, thus restoring the uniform ellipticity.
\end{enumerate}

This is an over-simplified description as sections on $\mathtt{A}%
_{\varepsilon}$ may become \emph{discontinuous} after periodical reflection,
so condition $2$ only helps in the $W^{1,p}$ setting to get a uniform bound of
$\left\Vert \mathcal{D}^{-1}\right\Vert $ as in \cite{Fukaya Oh}. More effort
is needed to estimate $\left\Vert \mathcal{D}^{-1}\right\Vert $ in the
Schauder $C^{1,\alpha}$ setting.

Recall in Subsection \ref{nonlinear_instanton_eq} we have formulated
instantons nearby $A_{\varepsilon}^{\prime}$ as solutions of the nonlinear
equation $F_{\varepsilon}\left(  V\right)  =0$, and it turns out $\left\Vert
F_{\varepsilon}\left(  0\right)  \right\Vert _{C^{\alpha}}\leq C\varepsilon
^{1-\alpha}$. In Subsection \ref{subsec:linearization} we have\ computed the
linearization $F_{\varepsilon}^{\prime}\left(  0\right)  $. To apply the
implicit function theorem to perturb $A_{\varepsilon}^{\prime}$ to a true
instanton, we need to estimate $\left\Vert F_{\varepsilon}^{\prime}\left(
0\right)  ^{-1}\right\Vert $, and we will see in Subsection \ref{exp-like} it
is comparable to $\left\Vert \mathcal{D}^{-1}\right\Vert $ (Proposition
\ref{compare-linear-model}). So a key estimate that we need is the following

\begin{theorem}
\label{e-inverse-bound} ($\varepsilon$-dependent bound) Suppose that the first
eigenvalues for $\overline{\partial}\overline{\partial}^{\ast}$ and
$\overline{\partial}^{\ast}\overline{\partial}$ are bounded below by
$\lambda>0.$ Then for any $0<\alpha<1$ and $p>3$ there is a positive constant
$C=C\left(  \alpha,p,\lambda,h\right)  $ such that for any $V\in
C_{-}^{1,\alpha}\left(  \mathtt{A}_{\varepsilon},\mathbb{S}\right)  $, we
have
\begin{equation}
\left\Vert V\right\Vert _{C_{-}^{1,\alpha}}\leq C\varepsilon^{-\left(
\frac{3}{p}+2\alpha\right)  }\left\Vert \mathcal{D}V\right\Vert _{C^{\alpha}}.
\label{e-right-inverse-estimate}%
\end{equation}

\end{theorem}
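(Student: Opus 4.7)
The plan is to trade the collapsing, non-uniformly-elliptic problem on $\mathtt{A}_{\varepsilon}=[0,\varepsilon]\times\Sigma$ for a uniformly elliptic problem on a thickened domain via periodic reflection, invert $\mathcal{D}$ there using spectral data on $\Sigma$, and only then pay the $\varepsilon$-cost of transferring back to the Hölder setting. The exponent $\varepsilon^{-(3/p+2\alpha)}$ should emerge from tracking (i) Sobolev embedding in dimension $3$, (ii) anisotropic rescaling of the thin domain, and (iii) localized Schauder estimates on balls of scale $\varepsilon$.

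\textbf{Step 1 (Reflection restores uniform ellipticity).} The boundary condition $v|_{\partial\mathtt{A}_{\varepsilon}}=0$ together with the block form $e_{1}\cdot=\mathrm{diag}(\mathbf{i},-\mathbf{i})$ of the Clifford action implies that the extension making $u$ even and $v$ odd across $\{x_{1}=0\}$ and $\{x_{1}=\varepsilon\}$ is compatible with $\mathcal{D}$. Iterating these reflections yields a section $\tilde{V}$ on the thickened cylinder $\mathtt{A}_{k\varepsilon}=[0,k\varepsilon]\times\Sigma$ with $k=k(\varepsilon)\in\mathbb{N}$ chosen so that $k\varepsilon\in[1/2,3/2]$, and $\mathcal{D}\tilde{V}$ is the analogous extension of $\mathcal{D}V$. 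On this now $\varepsilon$-independent domain the operator $\mathcal{D}$ is uniformly elliptic.

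\textbf{Step 2 (Spectral inversion in $L^{p}$, then $W^{1,p}$).} On $\mathtt{A}_{k\varepsilon}$, separate variables by a Fourier expansion in $x_{1}$ combined with spectral decomposition of the transverse Dirac operator $\bar{\partial}=(\overline{\partial},\overline{\partial}^{\ast})$. The hypothesis $\mathrm{spec}(\overline{\partial}\overline{\partial}^{\ast})\cup\mathrm{spec}(\overline{\partial}^{\ast}\overline{\partial})\subset[\lambda,\infty)$ controls the zero $x_{1}$-mode, while non-zero $x_{1}$-modes inherit invertibility from $h^{-1/2}\partial_{1}$. Summing modes produces a uniform $L^{p}$ bound $\|\tilde{V}\|_{L^{p}(\mathtt{A}_{k\varepsilon})}\leq C\|\mathcal{D}\tilde{V}\|_{L^{p}(\mathtt{A}_{k\varepsilon})}$, and first-order elliptic regularity on the uniform-sized domain upgrades this to $\|\tilde{V}\|_{W^{1,p}}\leq C\|\mathcal{D}\tilde{V}\|_{L^{p}}$. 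Undoing the reflection gives $\|V\|_{W^{1,p}(\mathtt{A}_{\varepsilon})}\leq C\|\mathcal{D}V\|_{L^{p}(\mathtt{A}_{\varepsilon})}$ with $C$ independent of $\varepsilon$.

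\textbf{Step 3 (Hölder upgrade and $\varepsilon$-bookkeeping).} Since $p>3$, Sobolev embedding $W^{1,p}\hookrightarrow C^{0}$ holds on a unit-sized domain. Anisotropic rescaling $x_{1}\mapsto x_{1}/\varepsilon$ of $\mathtt{A}_{\varepsilon}$ gives the degenerate embedding constant, which together with $\|\mathcal{D}V\|_{L^{p}}\leq C|\mathtt{A}_{\varepsilon}|^{1/p}\|\mathcal{D}V\|_{C^{0}}\leq C\varepsilon^{1/p}\|\mathcal{D}V\|_{C^{\alpha}}$ furnishes a bound of the form $\|V\|_{C^{0}}\leq C\varepsilon^{-3/p}\|\mathcal{D}V\|_{C^{\alpha}}$ after the dimensional bookkeeping. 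To pass from $C^{0}$ to $C^{1,\alpha}$, cover $\mathtt{A}_{\varepsilon}$ by (half-)balls of radius $\sim\varepsilon$ and apply interior Schauder estimates for $\mathcal{D}$, together with a boundary Schauder estimate respecting the mixed-type coassociative condition. Each such ball, after rescaling to unit size, contributes Hölder scalings of order $\varepsilon^{-\alpha}$ on both the $\mathcal{D}V$ and the $V$ side, so that combining the Schauder output with the $C^{0}$ bound accumulates a total loss of $\varepsilon^{-(3/p+2\alpha)}$, which is the claimed estimate.

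\textbf{Main obstacle.} The chief technical difficulty is that the reflection trick restores uniform ellipticity only in the $W^{1,p}$ setting: the reflected section has jumps in its normal derivative across each reflection plane and is not $C^{1,\alpha}$ globally. Consequently one cannot simply run Schauder on $\mathtt{A}_{k\varepsilon}$; Hölder information must instead be extracted by coupling the $W^{1,p}$ estimate on the enlarged domain (via Sobolev) with localized Schauder on the original thin domain at scale $\varepsilon$, where the natural Schauder constant already degenerates. Balancing these two losses against the gain $\varepsilon^{+1/p}$ from converting $C^{\alpha}$ to $L^{p}$ on $\mathtt{A}_{\varepsilon}$, and handling the boundary Schauder estimate at $C_{0}\cup C_{\varepsilon}$ with the mixed condition $v|_{\partial}=0$, is where the delicate $3/p+2\alpha$ arithmetic lives, and where the Schauder setting forces new effort beyond the $W^{1,p}$ argument of Fukaya--Oh.
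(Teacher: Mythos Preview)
Your overall architecture is sound --- reflection to restore uniform ellipticity in $W^{1,p}$, then upgrade to H\"older --- and you correctly flag in your last paragraph that the reflected section is not $C^{1,\alpha}$. But Step~3 contains a genuine gap: the arithmetic you sketch does not produce $\varepsilon^{-(3/p+2\alpha)}$, and what your method actually yields is too weak for the application.

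Concretely, from Step~2 together with Sobolev on the thickened domain $\mathtt{A}_{k\varepsilon}$ (where $k\varepsilon\sim 1$) one gets $\|V\|_{C^{0}}\le C\|\mathcal{D}V\|_{C^{0}}$ with \emph{no} $\varepsilon$-loss --- not $\varepsilon^{-3/p}$ as you write, but also with no $\varepsilon$-\emph{gain}. Feeding this into scaled first-order Schauder on (half-)balls of radius $\sim\varepsilon$ gives only
\[
\|V\|_{C^{1,\alpha}}\le C\bigl(\|\mathcal{D}V\|_{C^{\alpha}}+\varepsilon^{-1-\alpha}\|V\|_{C^{0}}\bigr)\le C\varepsilon^{-1-\alpha}\|\mathcal{D}V\|_{C^{\alpha}}.
\]
This neither matches the stated exponent when $p>3/(1-\alpha)$, nor closes the implicit function theorem in the main result: since $\|F_{\varepsilon}(0)\|_{C^{\alpha}}\le C\varepsilon^{1-\alpha}$, one would have $\|F_{\varepsilon}'(0)^{-1}\|\cdot\|F_{\varepsilon}(0)\|\sim\varepsilon^{-2\alpha}\not\to 0$.

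What is missing is a genuine $\varepsilon$-\emph{gain} in the $C^{0}$ bound, namely $\|V\|_{C^{0}}\le C\varepsilon^{1-3/p}\|\mathcal{D}V\|_{C^{0}}$. The paper obtains this through two steps you do not have. First, it uses the surjectivity of $\mathcal{D}$ (itself a consequence of $\lambda>0$) to decouple the right-hand side as $W=(w_{1},0)+(0,w_{2})$ and treat the two pieces separately; only in the $w_{1}=0$ piece does reflection preserve H\"older regularity. Second, for the hard piece $w_{2}=0$, it integrates the equation $v_{x_{1}}+\mathbf{i}\overline{\partial}u=0$ over $[0,\varepsilon]$ and uses $v|_{\partial\mathtt{A}_{\varepsilon}}=0$ to conclude that the fiberwise average $\overline{u}(z)=\int_{0}^{\varepsilon}u(x_{1},z)\,dx_{1}$ satisfies $\overline{\partial}\,\overline{u}=0$, hence $\overline{u}\equiv 0$ by the eigenvalue hypothesis $\lambda>0$. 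The intermediate value theorem then forces $u(x,z)=0$ for some $x\in[0,\varepsilon]$ on every fiber, and the H\"older definition gives $\|u\|_{C^{0}}\le C\varepsilon^{1-3/p}[u]_{C^{1-3/p}}$. This averaging-and-zeros argument is where the spectral hypothesis does real work \emph{beyond} the $W^{1,p}$ inversion; without it the $3/p+2\alpha$ exponent cannot be reached by local Schauder alone.
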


Here the notations $C_{\pm}^{1,\alpha}\left(  \mathtt{A}_{\varepsilon
},\mathbb{S}\right)  $ are chosen to indicate the \emph{local elliptic
boundary conditions }for sections\emph{ }$V=\left(  u,v\right)  $ of
$\mathbb{S=S}^{+}\oplus\mathbb{S}^{-}$:
\begin{align*}
C_{-}^{1,\alpha}\left(  \mathtt{A}_{\varepsilon},\mathbb{S}\right)   &
=\left\{  V\in C^{1,\alpha}\left(  \mathtt{A}_{\varepsilon},\mathbb{S}\right)
:v|_{\partial\mathtt{A}_{\varepsilon}}=0\right\} \\
C_{+}^{1,\alpha}\left(  \mathtt{A}_{\varepsilon},\mathbb{S}\right)   &
=\left\{  V\in C^{1,\alpha}\left(  \mathtt{A}_{\varepsilon},\mathbb{S}\right)
:u|_{\partial\mathtt{A}_{\varepsilon}}=0\right\}  .
\end{align*}
Similarly $W_{\pm}^{k,p}\left(  \mathtt{A}_{\varepsilon},\mathbb{S}\right)  $
will be used to indicate these boundary conditions.

To prove this theorem we first recall (for simplicity, we assume $h\equiv1$),
\begin{equation}
\mathcal{D}V=\left[
\begin{array}
[c]{cc}%
\mathbf{i} & 0\\
0 & -\mathbf{i}%
\end{array}
\right]  \left(  \frac{\partial}{\partial x_{1}}+\left[
\begin{array}
[c]{cc}%
0 & -\mathbf{i}\overline{\partial}^{\ast}\\
\mathbf{i}\overline{\partial} & 0
\end{array}
\right]  \right)  \left[
\begin{array}
[c]{c}%
u\\
v
\end{array}
\right]  . \label{3mfd-D}%
\end{equation}
Along $\Sigma$ we have the following

\begin{lemma}
\label{lem:lambda}Assume the $J_{n}$-holomorphic curve $\Sigma\subset C$ is
Fredholm regular, then
\[
\lambda_{\overline{\partial}^{\ast}}>0
\]
where $\lambda_{\overline{\partial}^{\ast}}$ is the first eigenvalue
of\ $\Delta_{\Sigma}=\overline{\partial}\overline{\partial}^{\ast}$ on
$W^{1,2}\left(  \Sigma,\mathbb{S}^{+}\right)  $.
\end{lemma}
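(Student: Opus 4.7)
The plan is to unpack the definition of Fredholm regularity of $\Sigma$ and then invoke standard Hodge theory on the compact Riemann surface $\Sigma$ to translate it into positivity of the first eigenvalue. The argument will proceed in three short steps.

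First, I would identify the relevant linearization. By Fredholm regularity, the linearization of $\overline{\partial}_{J_{n}}$ on $\Sigma\subset C$ is surjective; this linearization, restricted to the normal direction, is precisely the normal Cauchy--Riemann operator
\[
\overline{\partial}\colon \Gamma(\Sigma,L)\longrightarrow \Omega^{0,1}(\Sigma,L),\qquad L=N_{\Sigma/C},
\]
which is exactly the off-diagonal block of $\bar\partial=(\overline{\partial},\overline{\partial}^{\ast})$ appearing in (\ref{Dolbeault-Dirac}).

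Second, I would apply the Fredholm alternative. Since $\Sigma$ is a closed Riemann surface and $\overline{\partial}$ is an elliptic first-order operator acting between sections of line bundles, the operator $\overline{\partial}$ is Fredholm between the appropriate Sobolev completions, and
\[
\operatorname{coker}(\overline{\partial})\cong \ker(\overline{\partial}^{\ast}),
\]
so surjectivity of $\overline{\partial}$ is equivalent to $\ker(\overline{\partial}^{\ast})=0$ on $\Omega^{0,1}(L)$.

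Third, I would relate this to the spectrum of $\Delta_{\Sigma}=\overline{\partial}\overline{\partial}^{\ast}$. On a compact Riemann surface, $\Delta_{\Sigma}$ is an elliptic, nonnegative, formally self-adjoint second-order operator on the relevant spinor component, so its $L^{2}$-spectrum is discrete, nonnegative, and accumulates only at $+\infty$. The identity
\[
\langle \overline{\partial}\overline{\partial}^{\ast}v,v\rangle_{L^{2}}=\|\overline{\partial}^{\ast}v\|_{L^{2}}^{2}
\]
gives $\ker(\overline{\partial}\overline{\partial}^{\ast})=\ker(\overline{\partial}^{\ast})$. Combined with Step 2, this yields $\ker(\Delta_{\Sigma})=0$, and hence by the discreteness of the spectrum the first eigenvalue $\lambda_{\overline{\partial}^{\ast}}$ must be strictly positive, as required.

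There is essentially no analytic obstacle here; the content of the lemma is entirely a translation of the Fredholm-regular hypothesis through Hodge theory. The only thing that requires minor care is the bookkeeping matching "$\overline{\partial}\overline{\partial}^{\ast}$ on the spinor component" to the standard Dolbeault Laplacian on $\Omega^{0,1}(L)$ under the identification $\mathbb{S}_{\Sigma}^{+}\oplus\mathbb{S}_{\Sigma}^{-}=L\oplus\wedge^{0,1}_{\mathbb{C}}(L)$ from (\ref{Dolbeault-Dirac}); once that identification is fixed, the positivity of $\lambda_{\overline{\partial}^{\ast}}$ is immediate from steps 1--3.
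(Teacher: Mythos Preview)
Your proposal is correct and follows essentially the same approach as the paper: identify the linearization as the normal Cauchy--Riemann operator $\overline{\partial}$ on $N_{\Sigma/C}$, translate Fredholm regularity into $\ker\overline{\partial}^{\ast}=0$ via the Fredholm alternative, and conclude $\lambda_{\overline{\partial}^{\ast}}>0$. You have simply spelled out in more detail (the integration-by-parts identity and the discreteness of the spectrum) what the paper compresses into a single sentence, and you correctly flag the minor $\mathbb{S}^{+}$/$\mathbb{S}^{-}$ bookkeeping that the paper's statement glosses over.
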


\bigskip

\begin{proof}
This follows from the fact that $\overline{\partial}$ is the normal
Cauchy-Riemann operator on $N_{\Sigma/C}$, and its adjoint is $\overline
{\partial}^{\ast}$. So the Fredholm regular property of $\Sigma$ is equivalent
to $\ker\overline{\partial}^{\ast}=\left\{  0\right\}  $, i.e. $\lambda
_{\overline{\partial}^{\ast}}$ $>0$.
\end{proof}

\bigskip

$L^{2}$\textbf{ estimate}

The operator $\mathcal{D=D}_{\pm}:W_{\pm}^{1,2}\left(  \text{A}_{\varepsilon
},\mathbb{S}\right)  \rightarrow L^{2}\left(  \text{A}_{\varepsilon
},\mathbb{S}\right)  $ is self-adjoint by the boundary condition, thus
coker$\mathcal{D}_{-}=\ker\mathcal{D}_{+}$. By the Rayleigh quotient method,
we know
\begin{equation}
\lambda_{\mathcal{D}_{\pm}}:=\inf_{0\neq V\in W_{\pm}^{1,2}\left(
\mathtt{A}_{\varepsilon}\right)  }\frac{\left\Vert \mathcal{D}V\right\Vert
_{L^{2}\left(  \mathtt{A}_{\varepsilon}\right)  }^{2}}{\left\Vert V\right\Vert
_{L^{2}\left(  \mathtt{A}_{\varepsilon}\right)  }^{2}} \label{Rayleigh}%
\end{equation}
is the first eigenvalue of the Laplacian $\mathcal{D}_{\mp}\mathcal{D}_{\pm}$.

\begin{theorem}
\label{1ev}\textbf{(}First eigenvalue estimate\textbf{) }For $\mathcal{D}%
_{\pm}:W_{\pm}^{1,2}\left(  \mathtt{A}_{\varepsilon},\mathbb{S}\right)
\rightarrow L^{2}\left(  \mathtt{A}_{\varepsilon},\mathbb{S}\right)  $, we
have
\[
\lambda_{\mathcal{D}_{-}}\geq\min\left\{  \lambda_{\overline{\partial}}%
,\frac{2}{\varepsilon^{2}}\right\}  \text{ and }\lambda_{\mathcal{D}_{+}}%
\geq\min\left\{  \lambda_{\overline{\partial}^{\ast}},\frac{2}{\varepsilon
^{2}}\right\}  .
\]

\end{theorem}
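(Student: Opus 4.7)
The plan is to use the Rayleigh quotient characterization \eqref{Rayleigh} and to reduce the estimate to an elementary combination of (a) a spectral bound on $\Sigma$ and (b) a one--dimensional Poincar\'e inequality on $[0,\varepsilon]$. The central identity to establish first is that, for $V=(u,v)\in W^{1,2}_-(\mathtt{A}_\varepsilon,\mathbb{S})$,
\[
\|\mathcal{D}V\|^2_{L^2(\mathtt{A}_\varepsilon)}
\;=\;\|\partial_{x_1}u\|^2+\|\bar\partial u\|^2+\|\partial_{x_1}v\|^2+\|\bar\partial^{\ast}v\|^2 .
\]
Working for clarity with $h\equiv 1$ (the warped case is identical because the factor $\sqrt{h}$ in the volume cancels the $h^{-1/2}$ in $e_1\cdot\partial_{x_1}$), I read off from \eqref{3mfd-D} that $\mathcal{D}V=(i\partial_{x_1}u+\bar\partial^{\ast}v,\;\bar\partial u-i\partial_{x_1}v)$, so expanding the pointwise norm produces the four diagonal terms plus two cross terms $2\operatorname{Re}\langle i\partial_{x_1}u,\bar\partial^{\ast}v\rangle_{L^2}$ and $-2\operatorname{Re}\langle\bar\partial u,i\partial_{x_1}v\rangle_{L^2}$. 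I move $\bar\partial$ onto $i\partial_{x_1}u$ using the $\Sigma$-adjoint relation, commute $\partial_{x_1}$ past $\bar\partial$ (product metric), and integrate by parts in $x_1$; the resulting boundary contribution $\bigl[\langle i\bar\partial u,v\rangle_\Sigma\bigr]_0^\varepsilon$ vanishes because of the coassociative--type boundary condition $v|_{\partial\mathtt{A}_\varepsilon}=0$. Applying the algebraic identity $\operatorname{Re}\langle iA,B\rangle=-\operatorname{Re}\langle A,iB\rangle$ to the rewritten first cross term then matches it exactly with the negative of the second cross term, so the two cancel.

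Next I bound each positive piece. For every fixed $x_1$, the Rayleigh characterization of the first eigenvalue of $\bar\partial^{\ast}\bar\partial$ on $W^{1,2}(\Sigma,\mathbb{S}^+)$ yields $\int_\Sigma|\bar\partial u|^2\ge\lambda_{\bar\partial}\int_\Sigma|u|^2$; integrating in $x_1$ gives $\|\bar\partial u\|^2_{L^2(\mathtt{A}_\varepsilon)}\ge\lambda_{\bar\partial}\|u\|^2_{L^2(\mathtt{A}_\varepsilon)}$. For $v$ with $v(0,\cdot)=v(\varepsilon,\cdot)=0$, the pointwise identity $v(x_1,z)=\int_0^{x_1}\partial_s v(s,z)\,ds$ and Cauchy--Schwarz give $|v(x_1,z)|^2\le x_1\int_0^\varepsilon|\partial_s v(s,z)|^2\,ds$; integrating $x_1$ from $0$ to $\varepsilon$ and then over $\Sigma$ produces $\|\partial_{x_1}v\|^2_{L^2(\mathtt{A}_\varepsilon)}\ge(2/\varepsilon^2)\|v\|^2_{L^2(\mathtt{A}_\varepsilon)}$.

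Discarding the nonnegative $\|\partial_{x_1}u\|^2$ and $\|\bar\partial^{\ast}v\|^2$ and combining,
\[
\|\mathcal{D}V\|^2 \;\ge\; \lambda_{\bar\partial}\|u\|^2+\tfrac{2}{\varepsilon^2}\|v\|^2 \;\ge\;\min\bigl\{\lambda_{\bar\partial},\,2/\varepsilon^2\bigr\}\,\|V\|^2,
\]
which via \eqref{Rayleigh} gives the desired bound $\lambda_{\mathcal{D}_-}\ge\min\{\lambda_{\bar\partial},2/\varepsilon^2\}$. The estimate for $\mathcal{D}_+$ follows by an entirely symmetric argument: now the boundary condition is $u|_{\partial\mathtt{A}_\varepsilon}=0$, so the integration by parts in $x_1$ that kills the cross terms is performed with $u$ as the vanishing factor; the spectral bound $\|\bar\partial^{\ast}v\|^2\ge\lambda_{\bar\partial^{\ast}}\|v\|^2$ handles the $v$-terms, while Poincar\'e on $u$ gives $\|\partial_{x_1}u\|^2\ge(2/\varepsilon^2)\|u\|^2$.

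The main (and essentially only) subtle point is the exact cancellation of the two cross terms in the expansion of $\|\mathcal{D}V\|^2$; this is precisely where the local elliptic boundary condition enters and is the mechanism by which the coassociative boundary geometry on the instanton side propagates to the linear model. Any other boundary conditions would leave residual surface terms that one cannot absorb, which would in turn spoil the clean $\min\{\lambda,2/\varepsilon^2\}$ bound needed downstream for the $\varepsilon$-dependent estimate of $\|\mathcal{D}^{-1}\|$ in Theorem~\ref{e-inverse-bound}.
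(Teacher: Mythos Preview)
Your proof is correct and follows essentially the same approach as the paper: expand $\|\mathcal{D}V\|^2$, use the boundary condition $v|_{\partial\mathtt{A}_\varepsilon}=0$ to kill the cross terms, then invoke the Rayleigh quotient for $\bar\partial^{\ast}\bar\partial$ on $\Sigma$ together with a one-dimensional Poincar\'e inequality for $v$. The paper's proof is simply terser---it asserts the identity $\|\mathcal{D}V\|^2=\|\partial_{x_1}V\|^2+\|\bar\partial u\|^2+\|\bar\partial^{\ast}v\|^2$ ``by the boundary condition'' without writing out the integration-by-parts cancellation you spell out, and alludes to the Poincar\'e step only by ``notice that $v|_{\partial\mathtt{A}_\varepsilon}=0$.''
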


\begin{remark}
Theorem \ref{1ev} enables us to control the first eigenvalue of
$\mathcal{D}_{\pm}$ on $3$-dimensional $\mathtt{A}_{\varepsilon}$ by that of
$\bar{\partial}$ on $2$-dimensional $\Sigma$, when $\varepsilon$ is small. The
control is due to the boundary condition of $\mathcal{D}_{\pm}$, as will be
clear from the following proof.
\end{remark}

\begin{proof}
We prove the estimate for $\lambda_{\mathcal{D}_{-}}$ ($\lambda_{\mathcal{D}%
_{+}}$ is similar). By the boundary condition $v|_{\partial\mathtt{A}%
_{\varepsilon}}=0$ we have
\begin{align*}
\left\langle \mathcal{D}V,\mathcal{D}V\right\rangle _{L^{2}}  &
=\int_{\left[  0,\varepsilon\right]  \times\Sigma}\left(  \left\vert
\frac{\partial V}{\partial x_{1}}\right\vert ^{2}+\left\Vert \overline
{\partial}^{\ast}v\right\Vert ^{2}+\left\Vert \overline{\partial}u\right\Vert
^{2}\right) \\
&  \geq\int_{\left[  0,\varepsilon\right]  \times\Sigma}\left(  \left\Vert
\overline{\partial}u\right\Vert ^{2}+\left\Vert v_{x_{1}}\right\Vert
^{2}\right)  .
\end{align*}
Then use the Rayleigh quotient for $\overline{\partial}^{\ast}\overline
{\partial}$ and notice that $v|_{\partial\mathtt{A}_{\varepsilon}}=0$.
\end{proof}

\begin{corollary}
\label{D-surj}\bigskip If $\Sigma\subset C$ is Fredholm regular, then for
small enough $\varepsilon>0$, $\mathcal{D}:W_{-}^{1,2}\left(  \mathtt{A}%
_{\varepsilon},\mathbb{S}\right)  \rightarrow L^{2}\left(  \mathtt{A}%
_{\varepsilon},\mathbb{S}\right)  $ is surjective.
\end{corollary}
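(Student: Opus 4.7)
The plan is to deduce surjectivity by identifying $\operatorname{coker}\mathcal{D}_{-}$ with $\ker\mathcal{D}_{+}$ and then killing the latter using Theorem \ref{1ev} together with Lemma \ref{lem:lambda}. First I would verify that $\mathcal{D}:W_{-}^{1,2}(\mathtt{A}_{\varepsilon},\mathbb{S})\to L^{2}(\mathtt{A}_{\varepsilon},\mathbb{S})$ is Fredholm with closed range, which is automatic because $v|_{\partial\mathtt{A}_{\varepsilon}}=0$ is a local elliptic boundary condition in the sense of \cite{BW}. Integration by parts on the collar $[0,\varepsilon]\times\Sigma$ should then identify the Hilbert-space adjoint as the same operator $\mathcal{D}$ but acting on $W_{+}^{1,2}$, i.e.\ with the complementary boundary condition $u|_{\partial\mathtt{A}_{\varepsilon}}=0$; the point here is that Clifford multiplication by the boundary unit normal $e_{1}$ swaps $\mathbb{S}^{+}$ and $\mathbb{S}^{-}$, so the boundary term $\int_{\partial\mathtt{A}_{\varepsilon}}\langle e_{1}\!\cdot\! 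V_{1},V_{2}\rangle$ vanishes exactly when $V_{1}\in W_{-}^{1,2}$ and $V_{2}\in W_{+}^{1,2}$. This yields
\[
\operatorname{coker}\bigl(\mathcal{D}:W_{-}^{1,2}\to L^{2}\bigr)\;\cong\;\ker\bigl(\mathcal{D}:W_{+}^{1,2}\to L^{2}\bigr).
\]

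Next I would apply Theorem \ref{1ev} to the ``$+$'' boundary problem, which gives
\[
\lambda_{\mathcal{D}_{+}}\;\geq\;\min\Bigl\{\lambda_{\overline{\partial}^{\ast}},\;\tfrac{2}{\varepsilon^{2}}\Bigr\}.
\]
By Lemma \ref{lem:lambda}, the Fredholm regularity of $\Sigma\subset C$ forces $\lambda_{\overline{\partial}^{\ast}}>0$, so the right-hand side is strictly positive for all sufficiently small $\varepsilon>0$ (in fact for every $\varepsilon>0$, with the bound degenerating only via the first factor as $\varepsilon$ grows). The Rayleigh quotient characterization \eqref{Rayleigh} then gives $\ker\mathcal{D}_{+}=\{0\}$, hence $\operatorname{coker}\mathcal{D}_{-}=0$, and surjectivity of $\mathcal{D}:W_{-}^{1,2}\to L^{2}$ follows from the closedness of the range established in the first step.

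The main, rather mild, obstacle is the adjoint-boundary-condition verification used to set up the duality between $W_{-}^{1,2}$ and $W_{+}^{1,2}$. Beyond this, the argument is essentially a one-line combination of the eigenvalue estimate in Theorem \ref{1ev} with the transversality input of Lemma \ref{lem:lambda}, so no new analytic work is needed; the reason the corollary is already within reach is precisely that the hard eigenvalue estimate on the ``thin'' cylinder $\mathtt{A}_{\varepsilon}$ has been reduced to the two-dimensional eigenvalue $\lambda_{\overline{\partial}^{\ast}}$ on $\Sigma$, where Fredholm regularity directly applies.
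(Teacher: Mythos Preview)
Your proposal is correct and follows essentially the same approach as the paper: the paper also invokes the self-adjointness of $\mathcal{D}$ with respect to the $\pm$ boundary conditions to get $\operatorname{coker}\mathcal{D}_{-}=\ker\mathcal{D}_{+}$, then applies Lemma~\ref{lem:lambda} and Theorem~\ref{1ev} to conclude $\lambda_{\mathcal{D}_{+}}>0$ and hence $\ker\mathcal{D}_{+}=\{0\}$. One small inaccuracy: in this paper's conventions $e_{1}\cdot$ acts diagonally as $\pm\mathbf{i}$ on $\mathbb{S}^{\pm}$ rather than swapping the summands (see \eqref{3mfd-Dirac}), but the boundary term still vanishes for $V_{1}\in W_{-}^{1,2}$, $V_{2}\in W_{+}^{1,2}$ since then $v_{1}|_{\partial\mathtt{A}_{\varepsilon}}=0$ and $u_{2}|_{\partial\mathtt{A}_{\varepsilon}}=0$, so your duality conclusion stands.
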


\begin{proof}
By Lemma \ref{lem:lambda}, the Fredholm regular property of $\Sigma$ implies
that $\lambda_{\overline{\partial}^{\ast}}>0$. So by Theorem \ref{1ev},
$\lambda_{\mathcal{D}_{+}}>0$, i.e. $\ker\mathcal{D}_{+}=\left\{  0\right\}
$. By the self-adjoint property of $\mathcal{D}$, coker$\mathcal{D}_{-}%
=\ker\mathcal{D}_{+}=\left\{  0\right\}  $.
\end{proof}

Now, from the definition of $\lambda_{\mathcal{D}_{-}}$ we obtain the $L^{2}$ estimate

\begin{corollary}
\textbf{(}$L^{2}$-estimate\textbf{) }For any $V\in W_{-}^{1,2}\left(
\mathtt{A}_{\varepsilon},\mathbb{S}\right)  $ and small enough $\varepsilon$,
we have
\begin{equation}
\left\Vert V\right\Vert _{W_{-}^{1,2}\left(  \mathtt{A}_{\varepsilon
},\mathbb{S}\right)  }\leq C\left(  \lambda\right)  \left\Vert \mathcal{D}%
V\right\Vert _{L^{2}\left(  \mathtt{A}_{\varepsilon},\mathbb{S}\right)  },
\label{L2_Estimate}%
\end{equation}
where the constant $C\left(  \lambda\right)  $ only depends on $\lambda
_{\overline{\partial}}$.
\end{corollary}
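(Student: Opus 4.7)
The plan is to assemble three ingredients: the Rayleigh-quotient $L^{2}$ bound coming from Theorem \ref{1ev}, the integration-by-parts identity already appearing inside its proof, and G\aa rding's inequality for $\overline{\partial}$ on the fixed Riemann surface $\Sigma$.

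First, for $\varepsilon$ small enough that $2/\varepsilon^{2}\geq\lambda_{\overline{\partial}}$, Theorem \ref{1ev} gives $\lambda_{\mathcal{D}_{-}}\geq\lambda_{\overline{\partial}}$, so the Rayleigh-quotient characterization (\ref{Rayleigh}) immediately yields
\[
\|V\|_{L^{2}(\mathtt{A}_{\varepsilon})}^{2} \leq \lambda_{\overline{\partial}}^{-1}\,\|\mathcal{D}V\|_{L^{2}(\mathtt{A}_{\varepsilon})}^{2}.
\]
Second, the computation displayed in the proof of Theorem \ref{1ev} in fact gives the full identity
\[
\|\mathcal{D}V\|_{L^{2}}^{2} = \int_{[0,\varepsilon]\times\Sigma}\!\left(|\partial_{x_{1}}V|^{2} + \|\overline{\partial}^{\ast}v\|^{2} + \|\overline{\partial}u\|^{2}\right),
\]
once one checks that the cross terms between the $\partial_{x_{1}}$-block and the $\overline{\partial}$-block integrate to zero by Fubini together with the boundary condition $v|_{\partial\mathtt{A}_{\varepsilon}}=0$. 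In particular $\|\partial_{x_{1}}V\|_{L^{2}}^{2}$, $\|\overline{\partial}u\|_{L^{2}}^{2}$, and $\|\overline{\partial}^{\ast}v\|_{L^{2}}^{2}$ are each controlled by $\|\mathcal{D}V\|_{L^{2}}^{2}$.

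Third, I slice at fixed $x_{1}$ and invoke G\aa rding's inequality for the elliptic first-order operators $\overline{\partial}$ and $\overline{\partial}^{\ast}$ on the compact Riemann surface $\Sigma$:
\[
\|\nabla_{\Sigma}u(x_{1},\cdot)\|_{L^{2}(\Sigma)}^{2} \leq C_{\Sigma}\!\left(\|\overline{\partial}u(x_{1},\cdot)\|_{L^{2}(\Sigma)}^{2} + \|u(x_{1},\cdot)\|_{L^{2}(\Sigma)}^{2}\right),
\]
and the analogous bound for $v$. Integrating in $x_{1}$ and feeding in the first two steps then controls $\|\nabla V\|_{L^{2}(\mathtt{A}_{\varepsilon})}^{2}$ by $\|\mathcal{D}V\|_{L^{2}(\mathtt{A}_{\varepsilon})}^{2}$, which together with the $L^{2}$ bound yields (\ref{L2_Estimate}) with a constant depending only on $\lambda_{\overline{\partial}}$ and the fixed geometry of $\Sigma$.

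The only delicate point is bookkeeping: one must verify that the boundary terms arising in the integration by parts really do vanish (this is where the Dirichlet half of the local elliptic boundary condition on $v$ and the self-adjointness of $\mathcal{D}_{-}$ enter), and that the G\aa rding constant $C_{\Sigma}$ is $\varepsilon$-independent. The latter is immediate because $\Sigma$ together with its metric is fixed while $\mathtt{A}_{\varepsilon}$ varies only in the collapsing normal direction, so no new $\varepsilon$-blowup can enter the final constant.
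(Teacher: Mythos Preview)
Your argument is correct and follows the same route the paper intends. The paper's own proof is the single sentence ``from the definition of $\lambda_{\mathcal{D}_{-}}$ we obtain the $L^{2}$ estimate,'' which literally only gives the $L^{2}$-norm bound via the Rayleigh quotient; your steps two and three (the integration-by-parts identity already displayed in the proof of Theorem~\ref{1ev}, plus G\aa rding on the fixed surface $\Sigma$) are exactly the natural way to upgrade this to the full $W^{1,2}$ bound, and are what the paper is leaving implicit. The bookkeeping caveats you flag are the right ones, and your observation that $C_{\Sigma}$ is $\varepsilon$-independent because only the collapsing interval varies is precisely the point.
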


\bigskip In the following we will derive a $C^{0}$-estimate of $V$. For this
purpose let us examine the equation $\mathcal{D}_{-}V=W$ more closely.

For $V=\left(  u,v\right)  ,$ $W=\left(  w_{1},w_{2}\right)  $, the equation
$\mathcal{D}_{-}V=W$ (assuming $h=1$) is equivalent to the following system
\begin{equation}
\left\{
\begin{array}
[c]{ccc}%
u_{x_{1}}-\mathbf{i}\overline{\partial}^{\ast}v & = & w_{1}\\
v_{x_{1}}+\mathbf{i}\overline{\partial}u & = & w_{2}%
\end{array}
\right.  \ \ \ \text{and \ \ }v|_{\partial\mathtt{A}_{\varepsilon}}=0.
\label{CR1}%
\end{equation}
This is similar to the Cauchy-Riemann equation, but the relation between $u$
and $v$ is \emph{weaker}, since $\nabla u$ can only control $\overline
{\partial}^{\ast}v$ and $v_{x_{1}}$, which are only \emph{half} of partial
derivatives of $v$; the same applies to $\nabla v$. This issue makes it more
difficult to obtain the $C^{0}$-estimate of $u$ than in the Cauchy-Riemann
type equations.

$\bigskip$

$C^{0}$\textbf{ estimate}

\bigskip The $C^{0}$-estimate of $V$ is derived from a $W^{1,p}$-estimate of
$V$ and Sobolev embedding. To obtain a uniform estimate for $\left\Vert
\mathcal{D}^{-1}\right\Vert $ in the $L^{p}$ setting, we use the periodic
reflection technique.

\begin{theorem}
\textbf{(}$L^{p}$-estimate, $p>3$) For any $V\in W_{-}^{1,p}\left(
\mathtt{A}_{\varepsilon},\mathbb{S}\right)  ,$ we have%
\begin{equation}
\left\Vert V\right\Vert _{W_{-}^{1,p}\left(  \mathtt{A}_{\varepsilon
},\mathbb{S}\right)  }\leq C_{p}\left(  \lambda\right)  \left\Vert
\mathcal{D}V\right\Vert _{L^{p}\left(  \mathtt{A}_{\varepsilon},\mathbb{S}%
\right)  }. \label{Lp}%
\end{equation}
where the constant $C_{p}\left(  \lambda\right)  $ only depends on
$\lambda_{\overline{\partial}}$, $Vol\left(  \Sigma\right)  $ and $p$.
\end{theorem}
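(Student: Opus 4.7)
The plan is to use the periodic reflection technique (as flagged in Subsection \ref{D-inverse}) to replace the collapsing domain $\mathtt{A}_\varepsilon$ by a thickened one of uniformly bounded size, apply a standard Calder\'on--Zygmund boundary $L^p$ estimate there with $\varepsilon$-uniform constants, and finally absorb the resulting lower-order term using the previously established $L^2$ estimate (\ref{L2_Estimate}).

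Concretely, given $V = (u,v) \in W_{-}^{1,p}(\mathtt{A}_\varepsilon,\mathbb{S})$, I will extend $u$ by \emph{even} reflection and $v$ by \emph{odd} reflection across $\{0\} \times \Sigma$ and $\{\varepsilon\} \times \Sigma$. The coassociative-type boundary condition $v|_{\partial \mathtt{A}_\varepsilon} = 0$ makes the odd extension of $v$ continuous. Iterating these reflections produces a section $V_{\mathrm{ext}}$ on $\mathtt{A}_{k\varepsilon}$ with $k=k(\varepsilon)\in\mathbb{Z}_+$ chosen so that $k\varepsilon \in [1/2, 3/2]$, and the boundary condition still holds on the outer boundary by construction. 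Since $h = h(z)$ depends only on the $\Sigma$-coordinate, inspection of the local block form (\ref{3mfd-Dirac}) shows that under $x_1 \mapsto -x_1$ the first component of $\mathcal{D}V$ is odd and the second is even, so the parities of $V_{\mathrm{ext}}$ and of $(\mathcal{D}V)_{\mathrm{ext}}$ are compatible and $\mathcal{D}V_{\mathrm{ext}} = (\mathcal{D}V)_{\mathrm{ext}}$ holds almost everywhere (no delta singularities appear because $V_{\mathrm{ext}}$ is continuous, so its weak $x_1$-derivative is just an $L^p$ function with a jump at each reflection plane). By construction, $\|V_{\mathrm{ext}}\|_{W^{1,p}(\mathtt{A}_{k\varepsilon})}^p = 2k \|V\|_{W^{1,p}(\mathtt{A}_\varepsilon)}^p$ and similarly for $\|\mathcal{D}V\|_{L^p}$.

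On the thickened $\mathtt{A}_{k\varepsilon}$, whose geometry is uniformly bounded (the base $\Sigma$ is fixed and $k\varepsilon \in [1/2,3/2]$), the standard boundary $L^p$ elliptic estimate for the first-order elliptic operator $\mathcal{D}$ with local elliptic boundary condition gives
$$\|V_{\mathrm{ext}}\|_{W^{1,p}(\mathtt{A}_{k\varepsilon})} \leq C_p \bigl( \|\mathcal{D}V_{\mathrm{ext}}\|_{L^p(\mathtt{A}_{k\varepsilon})} + \|V_{\mathrm{ext}}\|_{L^p(\mathtt{A}_{k\varepsilon})} \bigr),$$
with $C_p$ independent of $\varepsilon$. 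To remove the lower-order term, I combine a Gagliardo--Nirenberg--type interpolation $\|V_{\mathrm{ext}}\|_{L^p} \leq \delta\|V_{\mathrm{ext}}\|_{W^{1,p}} + C_\delta \|V_{\mathrm{ext}}\|_{L^2}$ with the $L^2$ estimate (\ref{L2_Estimate}) applied to $V_{\mathrm{ext}}$ on $\mathtt{A}_{k\varepsilon}$. Theorem \ref{1ev} gives $\lambda_{\mathcal{D}_{-}} \geq \min\{\lambda_{\overline{\partial}},\, 2/(k\varepsilon)^2\}$, which is bounded below uniformly in $\varepsilon$ since $k\varepsilon \sim 1$ and $\lambda_{\overline{\partial}}>0$; H\"older on the bounded domain then converts $\|\mathcal{D}V_{\mathrm{ext}}\|_{L^2}$ to $\|\mathcal{D}V_{\mathrm{ext}}\|_{L^p}$. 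Choosing $\delta$ sufficiently small yields $\|V_{\mathrm{ext}}\|_{W^{1,p}} \leq C_p(\lambda) \|\mathcal{D}V_{\mathrm{ext}}\|_{L^p}$. Restricting to $\mathtt{A}_\varepsilon$ and invoking the scaling relations from Step 1, the $(2k)^{1/p}$ factors cancel and (\ref{Lp}) follows with a constant depending only on $\lambda_{\overline{\partial}}$, $\mathrm{Vol}(\Sigma)$, and $p$.

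The main obstacle is the reflection step: I must verify that the chosen parities force $\mathcal{D}V_{\mathrm{ext}} = (\mathcal{D}V)_{\mathrm{ext}}$ to hold in the weak sense on the doubled domain, avoiding any distributional delta contribution from the jumps of normal derivatives at reflection planes. This succeeds precisely because $h$ is independent of $x_1$ and because Clifford multiplication by $e_1$ has the diagonal form $\mathrm{diag}(\mathbf{i},-\mathbf{i})$ in (\ref{3mfd-Dirac}), so the even/odd choice dictated by the boundary condition $v|_{\partial\mathtt{A}_\varepsilon}=0$ is automatically the correct one. A secondary subtlety is ensuring the constant in the standard boundary $L^p$ estimate is genuinely $\varepsilon$-uniform; this is granted by the uniform geometric bounds on $\mathtt{A}_{k\varepsilon}$, for which the trapping $k\varepsilon\in[1/2,3/2]$ is essential.
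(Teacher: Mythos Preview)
Your argument is correct and follows essentially the same route as the paper's proof: periodic even/odd reflection of $(u,v)$ to a thickened domain of width $\sim 1$, a uniform $L^p$ elliptic estimate there, an interpolation inequality to trade the lower-order $L^p$ term for an $L^2$ term, and the already-proved $L^2$ estimate together with H\"older to finish. Your parity check that $(\mathcal{D}V)_{\mathrm{ext}}=\mathcal{D}V_{\mathrm{ext}}$ weakly matches the paper's observation that the extensions induce odd/even extensions of $(w_1,w_2)$; the only cosmetic differences are that the paper writes its elliptic estimate as an interior estimate on a further-doubled domain $\mathtt{A}_{2k\varepsilon}\cup\mathtt{A}_{-2k\varepsilon}$ rather than a boundary estimate on $\mathtt{A}_{k\varepsilon}$, and that it invokes (\ref{L2_Estimate}) on $\mathtt{A}_\varepsilon$ via periodicity rather than re-deriving the eigenvalue bound on the thickened domain from Theorem~\ref{1ev} as you do.
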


\begin{proof}
Given any $\varepsilon>0$, we choose an integer $k\left(  \varepsilon\right)
$ so that $1/2\leq k\left(  \varepsilon\right)  \varepsilon\leq3/2$. In the
following we will simply write $k\left(  \varepsilon\right)  $ as $k$.
We\textbf{\ }reflect A$_{\varepsilon}$ to A$_{k\varepsilon}$ periodically and
extend $\left(  u,v\right)  $ along the boundaries $\varepsilon\mathbb{Z}$
$\times\Sigma$\thinspace such that
\begin{align*}
v\left(  x,z\right)   &  =\left\{
\begin{array}
[c]{ccc}%
-v\left(  \left(  2j+2\right)  \varepsilon-x,z\right)  &  & \text{ }%
x\in\left[  \left(  2j+1\right)  \varepsilon,\left(  2j+2\right)
\varepsilon\right] \\
v\left(  x-2j\varepsilon,z\right)  &  & x\in\left[  2j\varepsilon,\left(
2j+1\right)  \varepsilon\right]
\end{array}
\right.  ,\\
u\left(  x,z\right)   &  =\left\{
\begin{array}
[c]{ccc}%
u\left(  \left(  2j+2\right)  \varepsilon-x,z\right)  &  & \text{ }x\in\left[
\left(  2j+1\right)  \varepsilon,\left(  2j+2\right)  \varepsilon\right] \\
u\left(  x-2j\varepsilon,z\right)  &  & x\in\left[  2j\varepsilon,\left(
2j+1\right)  \varepsilon\right]
\end{array}
\right.  ,
\end{align*}
(Notice that a $W^{k,p}$ section will remain so after reflections), i.e. we do
\emph{odd extensions} of $v$ and \emph{even extensions} of $u$ along the
boundaries. By $\left(  \ref{CR1}\right)  $, this induces \emph{odd
extensions} of $w_{1}$ and \emph{even extensions} of $w_{2}$ along the
boundaries. Since the shape of A$_{k\varepsilon}$ is uniformly bounded, we
have the elliptic estimate
\[
\tilde{C}\left(  p\right)  \left\Vert V\right\Vert _{W_{-}^{1,p}\left(
\mathtt{A}_{k\varepsilon},\mathbb{S}\right)  }\leq\left\Vert \mathcal{D}%
V\right\Vert _{L^{p}\left(  \mathtt{A}_{2k\varepsilon}\cup\mathtt{A}%
_{-2k\varepsilon},\mathbb{S}\right)  }+\left\Vert V\right\Vert _{L_{-}%
^{p}\left(  \mathtt{A}_{2k\varepsilon}\cup\mathtt{A}_{-2k\varepsilon
},\mathbb{S}\right)  }.
\]
with a uniform constant $\tilde{C}\left(  p\right)  $ for all $\varepsilon$.
Then we use the interpolation inequality
\begin{equation}
\left\Vert V\right\Vert _{L_{-}^{p}\left(  \mathtt{A}_{k\varepsilon}%
\cup\mathtt{A}_{-2k\varepsilon},\mathbb{S}\right)  }\leq C\left(
\delta^{\frac{p}{p-1}}\left\Vert V\right\Vert _{W^{1,p}\left(  \mathtt{A}%
_{k\varepsilon}\cup\mathtt{A}_{-2k\varepsilon},\mathbb{S}\right)  }%
+\delta^{-p}\left\Vert V\right\Vert _{L_{-}^{2}\left(  \mathtt{A}%
_{k\varepsilon}\cup\mathtt{A}_{-2k\varepsilon},\mathbb{S}\right)  }\right)
\text{ } \label{intropolation}%
\end{equation}
to pass from the $L^{2}$ estimate to the $L^{p}$ estimate%
\begin{equation}
\left\Vert V\right\Vert _{W_{-}^{1,p}\left(  \mathtt{A}_{k\varepsilon
},\mathbb{S}\right)  }\leq C_{p}\left(  \lambda\right)  \left\Vert
\mathcal{D}V\right\Vert _{L^{p}\left(  \mathtt{A}_{k\varepsilon}%
,\mathbb{S}\right)  }. \label{Lp_Ake}%
\end{equation}
This is because the boundary condition of $V\in W_{-}^{1,p}\left(
\mathtt{A}_{\varepsilon},\mathbb{S}\right)  $ and $\left(  \ref{L2_Estimate}%
\right)  $ yields
\[
\left\Vert V\right\Vert _{L_{-}^{2}\left(  \mathtt{A}_{k\varepsilon}%
\cup\mathtt{A}_{-2k\varepsilon},\mathbb{S}\right)  }\leq2C\left(
\lambda\right)  \left\Vert \mathcal{D}V\right\Vert _{L^{2}\left(
\mathtt{A}_{k\varepsilon},\mathbb{S}\right)  }\leq C_{p}\left(  \lambda
\right)  \left\Vert \mathcal{D}V\right\Vert _{L^{p}\left(  \mathtt{A}%
_{k\varepsilon},\mathbb{S}\right)  },
\]
where $C_{p}\left(  \lambda\right)  =2C\left(  \lambda\right)  \left(
\frac{3}{2}Vol\left(  \Sigma\right)  \right)  ^{\frac{1}{2}-\frac{1}{p}}$.
Last we obtain the inequality $\left(  \ref{Lp}\right)  $ on $\mathtt{A}%
_{\varepsilon}$ from $\left(  \ref{Lp_Ake}\right)  $ by the periodicity of the
$L^{p}$ integrals on the reflected domains.
\end{proof}

\begin{corollary}
\label{C0}\textbf{(}$C^{0}$ estimate\textbf{): }If for each $z\in\Sigma$ there
exist $x,x^{\prime}\in\left[  0,\varepsilon\right]  $ such that $u\left(
x,z\right)  =0$ and $v\left(  x^{\prime},z\right)  =0$, then%
\begin{equation}
\left\Vert V\right\Vert _{C^{0}\left(  \mathtt{A}_{\varepsilon},\mathbb{S}%
\right)  }\leq C\varepsilon^{1-\frac{3}{p}}\left\Vert \mathcal{D}V\right\Vert
_{C^{0}\left(  \mathtt{A}_{\varepsilon},\mathbb{S}\right)  }.
\label{C0-estimate}%
\end{equation}

\end{corollary}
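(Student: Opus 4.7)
The plan is to combine the $L^{p}$ estimate (\ref{Lp}) with Morrey's embedding in dimension three (valid for $p>3$), using the zero hypothesis on $u$ and $v$ to convert the Morrey H\"older bound $|V(x)-V(y)|\le C|x-y|^{1-3/p}\|\nabla V\|_{L^{p}}$ into an absolute pointwise bound. The collapsing of $\mathtt{A}_{\varepsilon}$ would make the Morrey constant on $\mathtt{A}_{\varepsilon}$ itself degenerate as $\varepsilon\to 0$, so the first step is to apply the same periodic reflection used in the proof of (\ref{Lp}) and work instead on $\mathtt{A}_{k\varepsilon}$ with $k\varepsilon\in[1/2,3/2]$. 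This reflected domain has $\varepsilon$-independent Lipschitz geometry, hence an $\varepsilon$-independent Morrey constant. Because $v|_{\partial\mathtt{A}_{\varepsilon}}=0$, the odd reflection of $v$ remains in $W^{1,p}$, the even reflection of $u$ is automatically $W^{1,p}$, and by periodicity $\|\nabla V\|_{L^{p}(\mathtt{A}_{k\varepsilon})}^{p}=k\,\|\nabla V\|_{L^{p}(\mathtt{A}_{\varepsilon})}^{p}$.

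Given an arbitrary $(x_{1},z)\in\mathtt{A}_{\varepsilon}$, the hypothesis supplies $x_{0}\in[0,\varepsilon]$ with $u(x_{0},z)=0$, so $|(x_{1},z)-(x_{0},z)|\le\varepsilon$, and Morrey on $\mathtt{A}_{k\varepsilon}$ then yields
$$|u(x_{1},z)|=|u(x_{1},z)-u(x_{0},z)|\le C\varepsilon^{1-3/p}\|\nabla u\|_{L^{p}(\mathtt{A}_{k\varepsilon})},$$
and the analogous bound for $v$ follows from the zero $x_{0}'$. I would then control the right-hand side in three consecutive steps: the periodic unfolding gives $\|\nabla V\|_{L^{p}(\mathtt{A}_{k\varepsilon})}=k^{1/p}\|\nabla V\|_{L^{p}(\mathtt{A}_{\varepsilon})}\le C\varepsilon^{-1/p}\|\nabla V\|_{L^{p}(\mathtt{A}_{\varepsilon})}$ since $k\sim 1/\varepsilon$; the $L^{p}$ estimate (\ref{Lp}) gives $\|\nabla V\|_{L^{p}(\mathtt{A}_{\varepsilon})}\le C_{p}(\lambda)\|\mathcal{D}V\|_{L^{p}(\mathtt{A}_{\varepsilon})}$; and the trivial inclusion gives $\|\mathcal{D}V\|_{L^{p}(\mathtt{A}_{\varepsilon})}\le|\mathtt{A}_{\varepsilon}|^{1/p}\|\mathcal{D}V\|_{C^{0}}\le C\varepsilon^{1/p}\|\mathcal{D}V\|_{C^{0}}$. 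Chaining these three estimates, the factors $\varepsilon^{-1/p}$ and $\varepsilon^{1/p}$ cancel, leaving $\|\nabla V\|_{L^{p}(\mathtt{A}_{k\varepsilon})}\le C\|\mathcal{D}V\|_{C^{0}}$; substituting back into the Morrey bound recovers (\ref{C0-estimate}).

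The principal obstacle I foresee is producing a Morrey inequality with an $\varepsilon$-independent constant, which is precisely what the reflection technique delivers, and which depends essentially on the coassociative boundary condition encoded in $W_{-}^{1,p}$ (without $v|_{\partial\mathtt{A}_{\varepsilon}}=0$ the odd extension of $v$ would fail to be $W^{1,p}$ and Morrey on $\mathtt{A}_{k\varepsilon}$ would not apply). The $\varepsilon$-bookkeeping is tight but consistent: the Morrey H\"older scale $\varepsilon^{1-3/p}$ survives intact precisely because the unfolding factor $\varepsilon^{-1/p}$ is cancelled by the $C^{0}$-to-$L^{p}$ conversion factor $\varepsilon^{1/p}$ on the thin slab $\mathtt{A}_{\varepsilon}$.
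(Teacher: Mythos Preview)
Your proposal is correct and follows essentially the same route as the paper: periodic reflection to $\mathtt{A}_{k\varepsilon}$, Morrey/Sobolev embedding with an $\varepsilon$-independent constant there, the zero hypothesis to turn the H\"older seminorm into a $C^{0}$ bound with factor $\varepsilon^{1-3/p}$, the $L^{p}$ estimate, and the $L^{p}$-to-$C^{0}$ conversion. The only cosmetic difference is that the paper applies the $L^{p}$ estimate directly on $\mathtt{A}_{k\varepsilon}$ (via the intermediate form (\ref{Lp_Ake})), so the volume of $\mathtt{A}_{k\varepsilon}$ is uniformly bounded and no $\varepsilon^{\pm 1/p}$ bookkeeping is needed; your explicit cancellation of $k^{1/p}\sim\varepsilon^{-1/p}$ against $|\mathtt{A}_{\varepsilon}|^{1/p}\sim\varepsilon^{1/p}$ is the same statement unpacked.
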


\begin{proof}
We have%
\begin{align*}
&  \left\Vert V\right\Vert _{C^{0}\left(  \mathtt{A}_{k\varepsilon}%
,\mathbb{S}\right)  }\overset{\text{Schauder}}{\leq}C\varepsilon^{1-\frac
{3}{p}}\left\Vert V\right\Vert _{C^{1-\frac{3}{p}}\left(  \mathtt{A}%
_{k\varepsilon},\mathbb{S}\right)  }\overset{\text{Sobolev}}{\leq}%
C\varepsilon^{1-\frac{3}{p}}\left\Vert V\right\Vert _{W^{1,p}\left(
\mathtt{A}_{k\varepsilon},\mathbb{S}\right)  }\\
&  \leq C\varepsilon^{1-\frac{3}{p}}\left\Vert \mathcal{D}V\right\Vert
_{L^{p}\left(  \mathtt{A}_{k\varepsilon},\mathbb{S}\right)  }\text{ (by
}\left(  \ref{Lp}\right)  \text{)}\leq C\varepsilon^{1-\frac{3}{p}}\left\Vert
\mathcal{D}V\right\Vert _{C^{0}\left(  \mathtt{A}_{k\varepsilon}%
,\mathbb{S}\right)  },
\end{align*}
where in the first inequality we have used the condition $u\left(  x,z\right)
=0=$ $v\left(  x^{\prime},z\right)  $ and the definition of the Schauder
$C^{1-\frac{3}{p}}$ norm. Then we notice the periodicity on $\mathtt{A}%
_{k\varepsilon}$ so we get $\left(  \ref{C0-estimate}\right)  $.
\end{proof}

$\bigskip$

$C^{1,\alpha}$\textbf{-estimate}

We can change $\mathcal{D}_{-}V=W$ $\left(  \ref{CR1}\right)  $ into a system
of second order elliptic equations%
\begin{equation}
\left\{
\begin{array}
[c]{c}%
u_{x_{1}x_{1}}-\overline{\partial}^{\ast}\overline{\partial}u=\mathbf{i}%
\overline{\partial}^{\ast}w_{2}+\partial_{x_{1}}w_{1}\\
v_{x_{1}x_{1}}-\overline{\partial}\text{ }\overline{\partial}^{\ast
}v=-\mathbf{i}\overline{\partial}w_{1}+\partial_{x_{1}}w_{2}%
\end{array}
\right.  \text{\ and \ \ }v|_{\partial\mathtt{A}_{\varepsilon}}=0,
\label{Dirichlet-uv}%
\end{equation}
where $V=\left(  u,v\right)  $ and $W=\left(  w_{1},w_{2}\right)  $.

To get the uniform estimate of $\left\Vert \mathcal{D}^{-1}\right\Vert $ in
the $C^{1,\alpha}$ setting, we can not just rely on periodic reflections,
because if $w_{1}\neq0$ then the extension of $\mathcal{D}V$ is no longer
continuous on $\mathtt{A}_{k\varepsilon}$, not to mention in $C^{\alpha
}\left(  \mathtt{A}_{k\varepsilon},\mathbb{S}\right)  $.

\begin{theorem}
\textbf{(}$C^{1,\alpha}$ estimate) For any $V\in C_{-}^{1,\alpha}\left(
\mathtt{A}_{\varepsilon},\mathbb{S}\right)  $, we have%
\[
C\varepsilon^{\frac{3}{p}+2\alpha}\left\Vert V\right\Vert _{C_{-}^{1,\alpha
}\left(  \mathtt{A}_{\varepsilon},\mathbb{S}\right)  }\leq\left\Vert
\mathcal{D}V\right\Vert _{C^{\alpha}\left(  \mathtt{A}_{\varepsilon
},\mathbb{S}\right)  }.
\]

\end{theorem}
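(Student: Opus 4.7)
The main obstruction to extending the $L^{p}$ argument is flagged just above the theorem: odd-extending $w_{1}$ (as required by the structure of $\left(\ref{CR1}\right)$) destroys continuity across the reflecting hypersurfaces, so $\mathcal{D}V$ does not remain in $C^{\alpha}(\mathtt{A}_{k\varepsilon},\mathbb{S})$ after periodic reflection. My plan therefore abandons the reflection trick in this setting and instead combines local Schauder estimates on the original $\mathtt{A}_{\varepsilon}$ with the $C^{0}$-estimate of Corollary~\ref{C0}, which already carries the decisive factor $\varepsilon^{1-3/p}$.

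First I would pass from the first-order equation $\mathcal{D}_{-}V=W$ to the decoupled second-order system $\left(\ref{Dirichlet-uv}\right)$ for $u$ and $v$. Together with $v|_{\partial\mathtt{A}_{\varepsilon}}=0$, the first equation of $\left(\ref{CR1}\right)$ yields the oblique/Neumann-type condition $u_{x_{1}}|_{\partial\mathtt{A}_{\varepsilon}}=w_{1}|_{\partial\mathtt{A}_{\varepsilon}}$, so that $u$ and $v$ each solve an elliptic boundary value problem with local elliptic boundary conditions in the sense of Lopatinski--Shapiro. Because the RHS of $\left(\ref{Dirichlet-uv}\right)$ is only a first derivative of a $C^{\alpha}$-function, I would write it in divergence form and apply the $C^{1,\alpha}$ (rather than $C^{2,\alpha}$) Schauder estimate on interior balls and boundary half-balls of radius $r=c\varepsilon$, obtaining the rescaled inequality
\[
\|V\|_{C^{1,\alpha}(B_{r/2})}\leq C\bigl(\|\mathcal{D}V\|_{C^{\alpha}(B_{r})}+r^{-1-\alpha}\|V\|_{C^{0}(B_{r})}\bigr).
\]
Patching over a covering of $\mathtt{A}_{\varepsilon}$ by $O(\varepsilon^{-1})$ such (half-)balls gives the global bound $\|V\|_{C^{1,\alpha}(\mathtt{A}_{\varepsilon})}\leq C\bigl(\|\mathcal{D}V\|_{C^{\alpha}}+\varepsilon^{-1-\alpha}\|V\|_{C^{0}}\bigr)$.

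Substituting $\|V\|_{C^{0}}\leq C\varepsilon^{1-3/p}\|\mathcal{D}V\|_{C^{\alpha}}$ from Corollary~\ref{C0} yields a first estimate with exponent $\varepsilon^{-3/p-\alpha}$; one further anisotropic H\"older interpolation along the thin $x_{1}$-direction, needed to reconcile the $C^{0}$-norm that enters the Schauder estimate with the full $C^{1,\alpha}_{-}$-norm on the collapsing domain, produces the remaining $\varepsilon^{-\alpha}$ and delivers the claimed exponent $\varepsilon^{-3/p-2\alpha}$. The hard part throughout is keeping the Schauder constants uniform in $\varepsilon$ despite the loss of uniform ellipticity as $\mathtt{A}_{\varepsilon}$ collapses to $\Sigma$; this is where the Fredholm regularity input $\lambda_{\overline{\partial}^{\ast}}>0$ of Lemma~\ref{lem:lambda}, entering through the $L^{2}$- and $L^{p}$-based $C^{0}$-estimate, is essential in preventing the lower-order term $\varepsilon^{-1-\alpha}\|V\|_{C^{0}}$ from growing faster than the target rate $\varepsilon^{-3/p-2\alpha}$.
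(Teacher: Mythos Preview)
Your proposal has a genuine gap: you invoke Corollary~\ref{C0} to control $\|V\|_{C^{0}}$ without verifying its hypothesis. That corollary requires, for each $z\in\Sigma$, the existence of some $x\in[0,\varepsilon]$ with $u(x,z)=0$. The condition for $v$ is automatic from $v|_{\partial\mathtt{A}_{\varepsilon}}=0$, but nothing in your argument forces $u$ to vanish anywhere on a given vertical segment. This is precisely the point at which the paper's proof does something you have skipped: it first \emph{decouples} $W=(w_{1},w_{2})$ into the cases $(0,w_{2})$ and $(w_{1},0)$, using the surjectivity of $\mathcal{D}$. In the case $w_{2}=0$, the second equation of~\eqref{CR1} gives $\overline{\partial}\,\overline{u}(z)=\int_{0}^{\varepsilon}\mathbf{i}\,\partial_{x_{1}}v\,dx_{1}=0$, so the average $\overline{u}$ lies in $\ker\overline{\partial}$; Fredholm regularity ($\lambda_{\overline{\partial}^{\ast}}>0$ hence $\ker\overline{\partial}^{\ast}\overline{\partial}=\{0\}$) then forces $\overline{u}\equiv 0$, and the intermediate value theorem supplies the required zero of $u$. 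Without this averaging step (which only works after setting $w_{2}=0$), Corollary~\ref{C0} is simply not available for $u$, and your estimate $\|V\|_{C^{0}}\leq C\varepsilon^{1-3/p}\|\mathcal{D}V\|_{C^{\alpha}}$ is unjustified.

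A second, smaller issue: you abandon periodic reflection entirely, but the paper keeps it for the $(0,w_{2})$ half of the decoupling. There, $u_{x_{1}}|_{\partial\mathtt{A}_{\varepsilon}}=\mathbf{i}\overline{\partial}^{\ast}v|_{\partial\mathtt{A}_{\varepsilon}}=0$, so the even extension of $u$ stays $C^{1,\alpha}$ and reflection restores uniform ellipticity in the Schauder setting directly. Only the $(w_{1},0)$ half requires the homogenization of $u$ via the cutoff $\rho(x_{1}/\varepsilon)$ and the careful tracking of $\varepsilon$-powers that your local half-ball estimate glosses over. Finally, your ``anisotropic H\"older interpolation'' producing the extra $\varepsilon^{-\alpha}$ is not explained; in the paper this factor arises concretely from the $\varepsilon^{-1}$ scaling in derivatives of $\rho(x_{1}/\varepsilon)$ when reducing the homogenized Dirichlet problem back to an estimate for $u$ in~\eqref{mixed-elliptic}.
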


\begin{proof}
We can decouple $W=\left(  w_{1},w_{2}\right)  $ into dealing with $\left(
0,w_{2}\right)  $ and $\left(  w_{1},0\right)  $ cases separately because
$\mathcal{D}$ is linear and \emph{surjective} from Corollary \ref{D-surj}.

When $w_{1}=0$, along $\partial$A$_{\varepsilon}$ we have $u_{x_{1}}%
=\overline{\partial}^{\ast}v=0$ (for $v|_{\partial\mathtt{A}_{\varepsilon}}%
=0$)$\,$, so we can reflect $u$ by even extension and it is still in
$C^{1,\alpha}$. We can extend $W$ in $C^{\alpha}\left(  \mathtt{A}%
_{\varepsilon},\mathbb{S}\right)  $ as well, for $w_{1}=0$. In this case we
can \emph{restore uniform ellipticity} in the Schauder setting by periodic reflection.

We deal with the more difficult case when $w_{1}\neq0$ but $w_{2}=0$. The
$v$\ component is easier, for it satisfies the Dirichlet boundary condition
$v|_{\partial\text{A}_{\varepsilon}}=0$. Standard Schauder estimate on half
balls (\cite{GT}) implies
\begin{equation}
C\varepsilon^{1+\alpha}\left\Vert v\right\Vert _{C_{-}^{1,\alpha}\left(
\mathtt{A}_{\varepsilon},\mathbb{S}\right)  }\leq\varepsilon\left\Vert
w_{1}\right\Vert _{C^{\alpha}\left(  \mathtt{A}_{\varepsilon},\mathbb{S}%
^{+}\right)  }+\left\Vert v\right\Vert _{C_{-}^{0}\left(  \mathtt{A}%
_{\varepsilon},\mathbb{S}\right)  }. \label{c-alpha}%
\end{equation}
Plugging the\emph{\ }$C^{0}$\emph{\ }estimate $\left(  \ref{C0-estimate}%
\right)  $ in above inequality we get
\[
\left\Vert v\right\Vert _{C_{-}^{1,\alpha}\left(  \mathtt{A}_{\varepsilon
},\mathbb{S}\right)  }\leq C\varepsilon^{-\left(  \frac{3}{p}+\alpha\right)
}\left\Vert W\right\Vert _{C^{\alpha}\left(  \mathtt{A}_{\varepsilon
},\mathbb{S}\right)  }.
\]

The $u$\ component is much harder, since $u|_{\partial\text{A}_{\varepsilon}%
}\neq0$ in general. We carry out the following steps:

(a) \ We \emph{homogenize }$u$ by introducing
\[
\widetilde{u}=u-\rho\left(  \frac{x_{1}}{\varepsilon}\right)  u\left(
\varepsilon,z\right)  -\left(  1-\rho\left(  \frac{x_{1}}{\varepsilon}\right)
\right)  u\left(  0,z\right)  ,
\]
where $\rho:\left[  0,1\right]  \rightarrow\left[  0,1\right]  ,\rho\left(
0\right)  =0,\rho\left(  1\right)  =1$ is a smooth cut-off function such that
$\left\Vert \rho\right\Vert _{C^{2,\alpha}\left[  0,1\right]  }\leq C\,$. Then
we get%
\[
\widetilde{u}_{x_{1}x_{1}}-\partial^{+}\partial^{-}\widetilde{u}%
=\partial_{x_{1}}\widetilde{w_{1}}+g\ \ \ \text{and\ }\ \widetilde
{u}|_{\partial\mathtt{A}_{\varepsilon}}=0,
\]
where the functions $\widetilde{w_{1}}$ and $g$ involve $w_{1}$ and the
boundary values of $u$. Applying elliptic regularity for this Dirichlet
problem of $\widetilde{u}$ and then writing back in $u$, we have
\begin{equation}
\varepsilon\left\Vert w_{1}\right\Vert _{C^{\alpha}}+\varepsilon^{1+\alpha
}\left\Vert u\right\Vert _{C_{-}^{0}}+\left\Vert v\right\Vert _{C_{-}^{0}}\geq
C\varepsilon^{1+2\alpha}\left\Vert u\right\Vert _{C_{-}^{1,\alpha}}.
\label{mixed-elliptic}%
\end{equation}
This is not quite easy, since we can not use $\nabla v$ to completely control
$\nabla u$ from $\left(  \ref{CR1}\right)  $. This was done by carefully
analyzing the $\varepsilon$-orders behaviors of the derivatives of the cut-off
function $\rho\left(  \frac{x_{1}}{\varepsilon}\right)  $ and elliptic
estimates of $\overline{\partial}$ and $\overline{\partial}^{\ast}$ on slices
$\left\{  x\right\}  \times\Sigma$.

(b) We show on each segment $\left[  0,\varepsilon\right]  \times\left\{
z\right\}  $\emph{\ there exists }$x$ \emph{with }$u\left(  x,z\right)  =0$.
Consider the \emph{average}$\ $of $u$ along each segment $\left[
0,\varepsilon\right]  \times\left\{  z\right\}  $:
\[
\overline{u}\left(  z\right)  =\int_{0}^{\varepsilon}u\left(  x_{1},z\right)
dx_{1}.
\]
From $\left(  \ref{CR1}\right)  $ we have (note $w_{2}=0$)
\[
\mathbf{i}\overline{\partial}\overline{u}\left(  z\right)  =-\int
_{0}^{\varepsilon}\partial_{x_{1}}v\left(  x_{1},z\right)  dx_{1}=v\left(
0,z\right)  -v\left(  \varepsilon,z\right)  =0.
\]
But from the assumption that $\overline{\partial}^{\ast}\overline{\partial}$
has trivial kernel on $\Sigma$ we get that
\[
\overline{u}\left(  z\right)  \equiv0.
\]
Then by the intermediate value theorem there must exist some $x\in\left[
0,\varepsilon\right]  $ such that $u\left(  x,z\right)  =0$. So by Corollary
\ref{C0} we get the desired $C^{0}$ estimate of $u$. Similarly for $v$.

(c) Putting the $C^{0}$ estimate of $V=\left(  u,v\right)  $ back to $\left(
\ref{mixed-elliptic}\right)  $, we finally get
\[
\left\Vert V\right\Vert _{C_{-}^{1,\alpha}\left(  \mathtt{A}_{\varepsilon
},\mathbb{S}\right)  }\leq C\varepsilon^{-\left(  \frac{3}{p}+2\alpha\right)
}\left\Vert DV\right\Vert _{C^{\alpha}\left(  \mathtt{A}_{\varepsilon
},\mathbb{S}\right)  }.
\]
Theorem \ref{e-inverse-bound} is proved.
\end{proof}

\subsubsection{Geometry of $J_{n}$-holomorphic curves in $G_{2}$ manifolds:
agreement of two Dirac operators\label{2Dirac}}

Recall given a family $\mathcal{C}=\cup_{0\leq t\leq\varepsilon}C_{t}$ of
coassociative manifolds $C_{t}$ in $M$, the (nonvanishing) deformation vector
field $n:=\frac{dC_{t}}{dt}|_{t=0}$ defines an almost complex structure
$J_{n}$ on $C=C_{0}$. For any $J_{n}$-holomorphic curve $\Sigma\subset C$, the
$G_{2}$-structure on $M$ gives a natural identification between $N_{\Sigma/C}$
and $N_{\mathcal{C}/M}|_{\Sigma}$ (\cite{GayetWitt}). Further calculations
\cite{LWZ1} established the following close relation between the
\textquotedblleft intrinsic\textquotedblright\ and \textquotedblleft
extrinsic\textquotedblright\ Dirac operators.

\begin{proposition}
\label{Dirac-bdl} (Proposition 16,17, \cite{LWZ1}) Let $\Sigma\subset
C\subset\mathcal{C}\subset M$ be as above. We have an orthogonal
decomposition
\[
TM|_{\Sigma}=T\Sigma\oplus N_{\Sigma/C}\oplus N_{\mathcal{C}/M}|_{\Sigma
}\oplus N_{C/\mathcal{C}}|_{\Sigma},
\]

\begin{enumerate}
\item For $L=T\Sigma$, $N_{\Sigma/C}$ or $N_{\mathcal{C}/M}|_{\Sigma}$, the
induced connection $\nabla^{L}$ from the Levi-Civita connection on $M$ is
Hermitian, i.e. $\nabla^{L}J_{n}=0$.

\item The spinor bundle $\mathbb{S}_{\Sigma}$ over $\Sigma$ is identified with
$N_{\Sigma/C}\oplus N_{\mathcal{C}/M}|_{\Sigma}$ in such a way that the
Clifford multiplication is given by the $G_{2}$ multiplication $\times$ and
the spinor connection equals $\nabla^{N_{\Sigma/C}\oplus N_{\mathcal{C}%
/M}|_{\Sigma}}$.

\item The Dirac operator on $\mathbb{S}_{\Sigma}=N_{\Sigma/C}\oplus
N_{\mathcal{C}/M}|_{\Sigma}$ agrees with the Dolbeault operator on
$N_{\Sigma/C}\oplus\wedge_{\mathbb{C}}^{0,1}\left(  N_{\Sigma/C}\right)  $.
\end{enumerate}
\end{proposition}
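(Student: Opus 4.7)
The plan is to work in an adapted orthonormal frame supplied by the Cayley--Dickson construction of Remark~\ref{Cayley-Dickson} and then verify the three assertions in turn, reducing each to a computation in the $G_{2}$ multiplication table. At a point $p\in\Sigma$, I pick a unit $W_{1}\in T_{p}\Sigma$ and set $W_{2}=J_{n}W_{1}=\tfrac{n}{\left\vert n\right\vert }\times W_{1}$; then $W_{3}=W_{1}\times W_{2}=\tfrac{n}{\left\vert n\right\vert }$ by the identity $a\times(a\times b)=g(a,b)\,a-\left\vert a\right\vert ^{2}b$, so $W_{3}$ spans $N_{C/\mathcal{C}}|_{p}$. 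Next choose a unit $W_{4}\in N_{\Sigma/C}|_{p}$ and define $W_{4+i}:=W_{i}\times W_{4}$ for $i=1,2,3$. Then $J_{n}W_{4}=W_{7}\in N_{\Sigma/C}|_{p}$, while $W_{5},W_{6}$ span $N_{\mathcal{C}/M}|_{p}$ with $J_{n}W_{5}=W_{6}$. I extend this frame to a neighborhood of $p$ and use it throughout.

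For Part (1), write $\nabla_{X}^{L}Y=\pi_{L}\nabla_{X}Y$ with $\pi_{L}$ the orthogonal projection onto $L$. Parallelism of $\times$ (i.e.\ $\nabla\Omega=0$) together with Leibniz gives
\begin{equation*}
\nabla_{X}(J_{n}Y)=\nabla_{X}\!\bigl(\tfrac{n}{\left\vert n\right\vert }\bigr)\times Y+\tfrac{n}{\left\vert n\right\vert }\times\nabla_{X}Y.
\end{equation*}
Since $L$ is $J_{n}$-invariant, $\pi_{L}\bigl(\tfrac{n}{\left\vert n\right\vert }\times\nabla_{X}Y\bigr)=J_{n}\nabla_{X}^{L}Y$, so it suffices to prove $\pi_{L}\bigl(\nabla_{X}(\tfrac{n}{\left\vert n\right\vert })\times Y\bigr)=0$. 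Because $\tfrac{n}{\left\vert n\right\vert }$ has unit length, $\nabla_{X}\tfrac{n}{\left\vert n\right\vert }\perp W_{3}$, so it is a combination of $W_{1},W_{2},W_{4},\ldots,W_{7}$; a direct inspection of the $G_{2}$ multiplication table in the adapted frame shows that for each $L\in\{T\Sigma,\,N_{\Sigma/C},\,N_{\mathcal{C}/M}|_{\Sigma}\}$ and each basis element $Y\in L$, the product $W_{i}\times Y$ lies in $L^{\perp}$ whenever $i\neq 3$. In other words, among the seven frame vectors only $W_{3}$ acts as a complex structure on a $J_{n}$-invariant plane $L$, and this is precisely what forces $\nabla^{L}J_{n}=0$.

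For Part (2), on a Riemann surface with Hermitian line bundle $E$ the spinor bundle admits the canonical model $\mathbb{S}=E\oplus\wedge_{\mathbb{C}}^{0,1}(E)$ with Clifford action built from $\bar{\partial}_{z},\partial_{z}$; I take $E=N_{\Sigma/C}$. The Gayet--Witt isomorphism $N_{\Sigma/C}\otimes\wedge_{\mathbb{C}}^{0,1}T^{\ast}\Sigma\simeq N_{\mathcal{C}/M}^{\ast}|_{\Sigma}$ recalled in Subsection~\ref{bdry-value-problem}, combined with the metric identification of a Hermitian line bundle with its dual, produces the bundle isomorphism $\mathbb{S}_{\Sigma}\simeq N_{\Sigma/C}\oplus N_{\mathcal{C}/M}|_{\Sigma}$. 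That the Clifford multiplication is exactly the $G_{2}$ cross product is checked on the adapted frame: e.g.\ $W_{1}\cdot W_{4}=W_{1}\times W_{4}=W_{5}\in N_{\mathcal{C}/M}|_{\Sigma}$ reproduces the formal Clifford product $e_{1}\cdot$ sending $\mathbb{S}^{+}$ into $\mathbb{S}^{-}$. Part~(1) then guarantees that $\nabla^{N_{\Sigma/C}}\oplus\nabla^{N_{\mathcal{C}/M}|_{\Sigma}}$ is Hermitian, hence agrees with the spin connection, which on a Riemann surface is pinned down uniquely by Hermiticity together with the Levi--Civita data.

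For Part (3), once Part (2) identifies bundles, Hermitian metrics, Clifford multiplication, and connections, the agreement of the Dirac operator on $\mathbb{S}_{\Sigma}$ with the Dolbeault operator $(\bar{\partial},\bar{\partial}^{\ast})$ on $N_{\Sigma/C}\oplus\wedge_{\mathbb{C}}^{0,1}(N_{\Sigma/C})$ is the standard K\"{a}hler identity for the spin-Dirac operator on a Riemann surface with Hermitian line bundle coefficients. The hard part is Part~(1): the algebraic verification that $\pi_{L}(\nabla_{X}(\tfrac{n}{\left\vert n\right\vert })\times Y)=0$ for each of the three choices of $L$ requires a careful case-by-case use of the $G_{2}$ multiplication table, and it is where the Cayley--Dickson frame adapted to $n$ and the coassociative family really pays off.
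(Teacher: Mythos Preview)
This exposition paper does not actually prove Proposition~\ref{Dirac-bdl}; it simply records the statement and cites Propositions~16--17 of \cite{LWZ1}. So there is no proof here to compare against, and your attempt must be judged on its own merits.

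Your argument for Part~(1) is correct. The Cayley--Dickson frame adapted to $(\Sigma,n)$ is exactly the right tool, and the key algebraic fact---that $W_i\times Y\in L^{\perp}$ for every $i\neq 3$ and every basis vector $Y\in L$---does hold (it is a finite check in the $G_2$ table, which you correctly flag as the crux). Likewise your reduction of Part~(3) to Part~(2) is standard and fine.

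The gap is in Part~(2). You assert that the spin connection ``is pinned down uniquely by Hermiticity together with the Levi--Civita data,'' and then conclude from Part~(1) alone that $\nabla^{N_{\Sigma/C}}\oplus\nabla^{N_{\mathcal{C}/M}|_\Sigma}$ must be the spinor connection. But Hermiticity does \emph{not} single out a connection on a complex line bundle; what characterizes the spinor connection is Hermiticity \emph{together with} the Clifford--Leibniz rule $\nabla_X^{\mathbb S}(Y\cdot s)=(\nabla_X^{T\Sigma}Y)\cdot s+Y\cdot\nabla_X^{\mathbb S}s$. You never verify this compatibility for the induced connection. If you expand $\nabla_X^M(Y\times s)$ using $\nabla\times=0$ and project to $\mathbb S=N_{\Sigma/C}\oplus N_{\mathcal{C}/M}|_\Sigma$, extra terms appear from the components of $\nabla_X^M Y$ in $\mathbb{R}W_3\oplus\mathbb S$ and of $\nabla_X^M s$ in $T\Sigma\oplus\mathbb{R}W_3$. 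Some of these vanish by a multiplication-table argument parallel to Part~(1) (e.g.\ $T\Sigma\times T\Sigma\subset\mathbb{R}W_3$ and $\mathbb S\times\mathbb S\subset T\Sigma\oplus\mathbb{R}W_3$), but the term coming from the $W_3$-component of the second fundamental form of $\Sigma\subset M$ contributes $\langle II_\Sigma(X,Y),W_3\rangle\,J_n s$, and you give no reason for this to be zero. Equivalently, you have not shown that the Gayet--Witt isomorphism (which is built from $\times$) intertwines the induced connection on $N_{\mathcal{C}/M}|_\Sigma$ with the tensor-product connection on $N_{\Sigma/C}\otimes\wedge^{0,1}T^*\Sigma$. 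This is precisely the ``further calculations'' of \cite{LWZ1} alluded to before the proposition, and it is where the real work lies; your sketch stops one step short of it.
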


The agreement of the two Dirac type operators on $\Sigma$ is the geometric
reason that the Fredholm regularity property of the \textquotedblleft
intrinsic\textquotedblright\ Cauchy-Riemann operator on $N_{\Sigma/C}$ gives
control of the \textquotedblleft extrinsic\textquotedblright\ linearized
instanton operator.

\subsubsection{Comparison of $\mathcal{D}$ and linearized instanton equation
\label{exp-like}}

\bigskip When we move from $\Sigma$ to the interior of the almost instanton
$A_{\varepsilon}^{\prime}$, the nice agreement of the two Dirac operators no
longer holds. We need to control the difference between the Dirac operator
$\mathcal{D}$ on $\mathbb{S}$ and the linearized instanton operator
$F_{\varepsilon}^{\prime}\left(  0\right)  $ on $N_{\mathtt{A}_{\varepsilon
}^{\prime}/M}$. In order to compare them, we need a \textit{good}
identification between $\mathbb{S}$ and $N_{\mathtt{A}_{\varepsilon}^{\prime
}/M}$.

For this purpose we defined an \emph{exponential-like map} $\widetilde{\exp}:$
$\mathbb{S\rightarrow}M$ such that $\widetilde{\exp}\left(  \mathtt{A}%
_{\varepsilon}\right)  =\mathtt{A}_{\varepsilon}^{\prime}$ and its
differential $d\widetilde{\exp}|_{\mathtt{A}_{\varepsilon}}$ has the following
properties on $\left\{  0\right\}  \times\Sigma$ (see Appendix of \cite{LWZ1}):

\begin{enumerate}
\item On fiber directions of $\mathbb{S}$, $d\widetilde{\exp}|_{\left\{
0\right\}  \times\Sigma}=\left(  id,f\right)  :N_{\Sigma/C}\oplus
\wedge_{\mathbb{C}}^{0,1}\left(  N_{\Sigma/C}\right)  \rightarrow N_{\Sigma
/C}\oplus N_{\mathcal{C}/M}|_{\Sigma},$ where $f$ is the isomorphism between
the second summands given in Lemma 3.2 of \cite{GayetWitt};

\item On base directions of $\mathbb{S}$, $d\widetilde{\exp}|_{\left\{
0\right\}  \times\Sigma}=id:T\Sigma\rightarrow T\Sigma$ and $d\widetilde{\exp
}|_{\left\{  0\right\}  \times\Sigma}:\frac{\partial}{\partial x_{1}%
}\rightarrow n\left(  z\right)  ;$

\item On the boundary $\left\{  0, \varepsilon\right\}  \times\Sigma$ of $\mathtt{A}%
_{\varepsilon}$, $\widetilde{\exp}|_{\left\{  0\right\}  \times\Sigma}\left(
\mathbb{S}^{+}\oplus0\right)  \subset C_{0}$, $\widetilde{\exp}|_{\left\{ \varepsilon \right\}  \times\Sigma}\left(
\mathbb{S}^{+}\oplus0\right)  \subset C_{\varepsilon}$.
\end{enumerate}

Using $d\widetilde{\exp}|_{\mathtt{A}_{\varepsilon}}$, we can relate
the\emph{\ spin bundle} $\mathbb{S\rightarrow}\mathtt{A}_{\varepsilon}$ to
the\emph{\ normal bundle} $N_{\mathtt{A}_{\varepsilon}^{\prime}/M}$
$\rightarrow\mathtt{A}_{\varepsilon}^{\prime}$. Because of the compatibilities
given in 1-3 above, using Proposition \ref{Dirac-bdl} we have the following
comparison result.

\begin{proposition}
\label{compare-linear-model}\bigskip\ (Proposition 18, \cite{LWZ1}) For any
$V_{1}\in C^{1,\alpha}\left(  A_{\varepsilon},\mathbb{S}\right)  $, we have%
\[
\left\Vert F_{\varepsilon}^{\prime}\left(  0\right)  \left(  d\widetilde{\exp
}\cdot V_{1}\right)  -\left(  d\widetilde{\exp}\right)  \circ\mathcal{D}%
V_{1}\right\Vert _{C^{\alpha}(A_{\varepsilon}^{\prime},N_{A_{\varepsilon
}^{\prime}/M})}\leq C\varepsilon^{1-\alpha}\left\Vert V_{1}\right\Vert
_{C^{1,\alpha}\left(  A_{\varepsilon},\mathbb{S}\right)  }.
\]

\end{proposition}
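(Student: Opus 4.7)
The plan is to introduce the difference operator
\[
L := F_\varepsilon'(0) \circ d\widetilde{\exp} - d\widetilde{\exp} \circ \mathcal{D}
\]
acting on $C^{1,\alpha}(A_\varepsilon, \mathbb{S})$, observe that $L$ is a first-order differential operator whose coefficients vanish identically on the initial slice $\{0\}\times\Sigma$, and then deduce the bound by Hölder interpolation over the thin slab $A_\varepsilon = [0,\varepsilon]\times\Sigma$.

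The first step is to verify that every coefficient of $L$ vanishes on $\{0\}\times\Sigma$. By Proposition \ref{Dirac-bdl}, $\mathbb{S}_\Sigma$ is canonically identified with $N_{\Sigma/C}\oplus N_{\mathcal{C}/M}|_\Sigma$ in a way that intertwines Clifford multiplication with the $G_2$ cross product and identifies the spinor connection with the connection induced from the Levi-Civita connection on $M$; moreover the Dirac operator on $\mathbb{S}_\Sigma$ agrees with the Dolbeault operator on $N_{\Sigma/C}\oplus\wedge_{\mathbb{C}}^{0,1}(N_{\Sigma/C})$. Properties (1) and (2) of $\widetilde{\exp}$ listed in Subsection \ref{exp-like} are arranged precisely so that $d\widetilde{\exp}|_{\{0\}\times\Sigma}$ realizes this fiberwise identification and sends $\partial_{x_1}$ to the normal vector $n$, which (after normalization by $|n|$) implements the Clifford action of $e_1$ used in $\mathcal{D}$. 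Hence at any $p\in\{0\}\times\Sigma$ both the principal symbol and the zeroth order (connection) part of $F_\varepsilon'(0)$, transported to $\mathbb{S}$ via $d\widetilde{\exp}$, agree with those of $\mathcal{D}$. Thus each coefficient $a$ of $L$, viewed as a first order operator on $\mathbb{S}$, satisfies $a(0,z)\equiv 0$.

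Next I would carry out the Hölder interpolation. The coefficients of $L$ are smooth with $\varepsilon$-independent bounds, so for any such $a$ vanishing at $x_1=0$,
\[
\Vert a\Vert_{C^0(A_\varepsilon)} \leq \varepsilon\,\Vert\partial_{x_1}a\Vert_{C^0(A_\varepsilon)} = O(\varepsilon),\qquad \Vert a\Vert_{C^1(A_\varepsilon)} = O(1).
\]
Splitting pairs of points according to whether their distance is $\leq\varepsilon$ or $>\varepsilon$ gives $[a]_{C^\alpha(A_\varepsilon)} \leq C\varepsilon^{1-\alpha}$, whence $\Vert a\Vert_{C^\alpha(A_\varepsilon)} \leq C\varepsilon^{1-\alpha}$. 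Applying the product rule $\Vert ab\Vert_{C^\alpha}\leq 2\Vert a\Vert_{C^\alpha}\Vert b\Vert_{C^\alpha}$ to each zeroth and first order term of $L$ yields
\[
\Vert LV_1\Vert_{C^\alpha(A_\varepsilon,\mathbb{S})} \leq C\varepsilon^{1-\alpha}\Vert V_1\Vert_{C^{1,\alpha}(A_\varepsilon,\mathbb{S})}.
\]
Since $d\widetilde{\exp}$ and its inverse are uniformly bounded in $C^{1,\alpha}$ on $A_\varepsilon$, this is equivalent, after identifying $\mathbb{S}$ with $N_{A_\varepsilon'/M}$, to the asserted inequality.

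The main obstacle is the algebraic verification in the first step that the coefficients of $L$ vanish on $\{0\}\times\Sigma$. This involves tracking how (i) the Cayley-Dickson good frame $\{W_\alpha\}_{\alpha=1}^{7}$ aligns along $\Sigma$ with the decomposition $TM|_\Sigma = T\Sigma\oplus N_{\Sigma/C}\oplus N_{\mathcal{C}/M}|_\Sigma\oplus N_{C/\mathcal{C}}|_\Sigma$, (ii) the Gayet-Witt isomorphism $f$ from Lemma 3.2 is the same one entering Property 1 of $d\widetilde{\exp}$, and (iii) the connection matrices $B_\alpha,C_\alpha,E$ in (\ref{eq:lin-instanton}) reduce on $\{0\}\times\Sigma$ to the spinor connection of Proposition \ref{Dirac-bdl}. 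Once these identifications are made, the vanishing of $L$ on $\{0\}\times\Sigma$ is forced by Proposition \ref{Dirac-bdl} and the interpolation step is routine.
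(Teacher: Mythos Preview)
Your proposal is correct and follows essentially the same route the paper indicates: the paper's argument (sketched just before Proposition~\ref{compare-linear-model}) is precisely that the compatibilities (1)--(3) of $\widetilde{\exp}$ together with Proposition~\ref{Dirac-bdl} force $F_\varepsilon'(0)$ and $\mathcal{D}$ to agree on $\{0\}\times\Sigma$ after transport by $d\widetilde{\exp}$, and then the $\varepsilon^{1-\alpha}$ bound follows from thinness of $A_\varepsilon$. Your H\"older interpolation step (splitting according to whether the two points are within distance $\varepsilon$) makes explicit what the paper leaves implicit, and your identification of the ``main obstacle'' --- the algebraic check that the zeroth-order terms $B_\alpha,C_\alpha,E$ of \eqref{eq:lin-instanton} match the spinor connection on $\{0\}\times\Sigma$ --- is exactly the content the paper defers to \cite{LWZ1}.
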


Then we get the uniform inverse estimate of $F_{\varepsilon}^{\prime}\left(
0\right)  $ from $\left\Vert \mathcal{D}^{-1}\right\Vert $ by the following
diagram, using that $d\widetilde{\exp}$ and $\left(  d\widetilde{\exp}\right)
^{-1}$ are both smooth with a uniform $C^{2}$ bound for all $\varepsilon$:%

\[%
\begin{tabular}
[c]{lll}%
$C_{-}^{1,\alpha}\left(  \mathtt{A}_{\varepsilon},\mathbb{S}\right)  $ &
$\overset{\mathcal{D}}{\longrightarrow}$ & $C^{\alpha}\left(  \mathtt{A}%
_{\varepsilon},\mathbb{S}\right)  $\\
$d\widetilde{\exp}\downarrow$ &  & $\ \downarrow d\widetilde{\exp}$\\
$C_{-}^{1,\alpha}\left(  N_{\mathtt{A}_{\varepsilon}^{\prime}/M}\right)  $ &
$\overset{F_{\varepsilon}^{\prime}\left(  0\right)  }{\longrightarrow}$ &
$C^{\alpha}\left(  N_{\mathtt{A}_{\varepsilon}^{\prime}/M}\right)  $%
\end{tabular}
\ .
\]

\subsubsection{Quadratic estimate\label{subsec:quadratic}}

Using any local frame field $\left\{  W_{\alpha}\right\}  _{\alpha=1}^{7}$, we
could compare the linearizations of $F\left(  V\right)  $ at two different
almost instantons, up to a curvature term $B$ as follows:%

\begin{align}
&  F^{\prime}\left(  V_{0}\right)  |_{A_{\varepsilon\left(  0\right)  }%
}V\left(  p\right) \nonumber\\
&  =\left(  \left(  \exp V_{0}\right)  ^{\ast}\otimes T_{V_{0}}\right)
\circ\left[  F^{\prime}\left(  0\right)  |_{A_{\varepsilon}\left(
V_{0}\right)  }V_{1}\left(  q\right)  \right]  +B^{\alpha}\left(
V_{0},V\right)  W_{\alpha}\left(  p\right)  , \label{lin-functorial}%
\end{align}
where $V_{1}=\left(  d\exp V_{0}\right)  |_{A_{\varepsilon}\left(  0\right)
}V$. This means the derivative $F^{\prime}\left(  V_{0}\right)  $ on
$A_{\varepsilon}\left(  0\right)  $ can be expressed by the derivative
$F^{\prime}\left(  0\right)  $ on $A_{\varepsilon}\left(  V_{0}\right)  $ via
the transform $\left(  \exp V_{0}\right)  ^{\ast}\otimes T_{V_{0}}$, up to the
curvature term $B^{\alpha}\left(  V_{0},V\right)  W_{\alpha}\left(  p\right)
$. Using $\left(  \ref{lin-functorial}\right)  $ we get%
\[
F^{\prime}\left(  V_{0}\right)  V\left(  p\right)  -F^{\prime}\left(
0\right)  V\left(  p\right)  =\left(  I\right)  +\left(  II\right)
\]
with
\begin{align*}
\left(  I\right)   &  =\left[  \left(  \exp V_{0}\right)  ^{\ast}d\left(
i_{V_{1}}\omega^{\alpha}\right)  -d\left(  i_{V}\omega^{\alpha}\right)
\right]  |_{A_{\varepsilon\left(  0\right)  }}\otimes W_{\alpha}\left(
p\right)  ,\\
\left(  II\right)   &  =\left(  \left(  \exp V_{0}\right)  ^{\ast}\otimes
T_{V_{0}}\right)  \circ\left[  i_{V_{1}}d\omega^{\alpha}\otimes W_{\alpha
}+\omega^{\alpha}\otimes\nabla_{V_{1}}W_{\alpha}\right]  \left(  p\right)
+B^{\alpha}\left(  V_{0},V\right)  W_{\alpha}\left(  p\right)  .
\end{align*}
Here $\left(  I\right)  $ consists of 1$^{st}$ order terms whose $C^{\alpha}%
$-norm are bounded by $\left\Vert V_{0}\right\Vert _{C^{1,\alpha}}\left\Vert
V\right\Vert _{C^{1,\alpha}}$ because $d$, the pull back operator, the
parallel transport and various exponential maps are Frechet smooth with
respect to variations of $V$. $\left(  II\right)  $ consists of 0$^{th}$ order
terms and it is easier to bound. Thus we get

\begin{proposition}
\label{Quadratic} (Quadratic estimate) For any $V_{0},V\in C^{1,\alpha}\left(
A_{\varepsilon}^{\prime},N_{A_{\varepsilon}^{\prime}/M}\right)  $ with small
$\left\Vert V_{0}\right\Vert _{C^{1,\alpha}}$, we have%
\begin{equation}
\left\Vert F_{\varepsilon}^{\prime}\left(  V_{0}\right)  V-F_{\varepsilon
}^{\prime}\left(  0\right)  V\right\Vert _{C^{\alpha}}\leq C\left\Vert
V_{0}\right\Vert _{C^{1,\alpha}}\left\Vert V\right\Vert _{C^{1,\alpha},}
\label{Quadratic-estimate}%
\end{equation}
where the constant $C$ is independent on $\varepsilon$.
\end{proposition}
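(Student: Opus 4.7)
My plan is to start from the functorial identity \eqref{lin-functorial}, which expresses $F'_\varepsilon(V_0)V$ at $p\in A_\varepsilon(0)$ as the image, under the transform $(\exp V_0)^{\ast}\otimes T_{V_0}$, of the linearization $F'(0)V_1$ computed at the shifted point $q=\exp V_0(p)\in A_\varepsilon(V_0)$, modulo the curvature remainder $B^\alpha(V_0,V)W_\alpha$. Subtracting $F'_\varepsilon(0)V$ and applying the generalized Cartan formula \eqref{vec-form-derivative} on both sides, I would obtain the explicit splitting $F'_\varepsilon(V_0)V-F'_\varepsilon(0)V=(I)+(II)$ already indicated in the excerpt, where $(I)$ collects the top-order contribution from the pull-back of $d(i_{V_1}\omega^\alpha)$ and $(II)$ gathers the zero-th order terms coming from $i_V d\omega^\alpha$, $\nabla_V W_\alpha$ and the curvature remainder $B^\alpha$.

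For the zero-th order piece $(II)$ the estimate is essentially algebraic. Evaluating pointwise, the expression $i_{V_1}d\omega^\alpha\otimes W_\alpha+\omega^\alpha\otimes\nabla_{V_1}W_\alpha$ reduces at $V_0=0$ to $i_V d\omega^\alpha\otimes W_\alpha+\omega^\alpha\otimes\nabla_V W_\alpha$, so the difference splits into (a) the difference between applying $(\exp V_0)^{\ast}\otimes T_{V_0}$ and the identity to a tensor involving $V$ only, which is Lipschitz of order $\|V_0\|_{C^{0,\alpha}}\|V\|_{C^{0,\alpha}}$ by smoothness of the exponential and parallel transport in $V_0$, and (b) the curvature remainder $B^\alpha(V_0,V)$, which by its derivation from the Riemann tensor is bilinear in $(V_0,V)$ with coefficients bounded in $C^\alpha$ by the $C^1$-norm of $\mathrm{Rm}(g)$ on a fixed tubular neighborhood of $\Sigma$. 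Both contributions are therefore controlled by $C\|V_0\|_{C^{1,\alpha}}\|V\|_{C^{1,\alpha}}$ with $\varepsilon$-independent $C$.

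The main obstacle is the first-order piece $(I)=\bigl[(\exp V_0)^{\ast} d(i_{V_1}\omega^\alpha)-d(i_V\omega^\alpha)\bigr]\otimes W_\alpha$. The strategy I would pursue is to view $\Phi\colon V_0\mapsto(\exp V_0)^{\ast}d(i_{V_1}\omega^\alpha)$ as a \emph{Fr\'echet smooth} map from a small $C^{1,\alpha}$-ball of sections of $N_{A'_\varepsilon/M}$ into $C^\alpha$-valued $3$-forms on $A'_\varepsilon$, with $\Phi(0)=d(i_V\omega^\alpha)$. Because $d$, interior product, and pull-back commute in the required way, a first-order Taylor expansion of $\Phi$ about $V_0=0$ produces a remainder that depends bilinearly on first derivatives of $V_0$ and first derivatives of $V$, with coefficients given by up-to-second derivatives of the fixed tensors $\omega^\alpha$ and the exponential map on $M$. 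The delicate point is that although $d(i_V\omega^\alpha)$ already contains one derivative of $V$, the Taylor expansion produces only one extra factor $\|V_0\|_{C^{1,\alpha}}$, never a factor requiring a $C^2$-bound on $V$; this is what yields the bilinear rather than loss-of-derivative estimate $\|\Phi(V_0)-\Phi(0)\|_{C^\alpha}\leq C\|V_0\|_{C^{1,\alpha}}\|V\|_{C^{1,\alpha}}$.

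Combining the two bounds gives \eqref{Quadratic-estimate}. The uniformity of the constant in $\varepsilon$ is the point that I would verify last: every geometric quantity used — the exponential on $(M,g)$, the parallel transport $T_{V_0}$, the good frame $\{W_\alpha\}$ of Subsection~\ref{subsec:linearization} with its structure functions, and the curvature of $g$ — is defined on a fixed $\varepsilon_0$-neighborhood of $\Sigma\subset C_0$, independent of $\varepsilon$, and its $C^2$-norm on $A'_\varepsilon$ is controlled by its $C^2$-norm on that neighborhood. The smallness of $\|V_0\|_{C^{1,\alpha}}$ is invoked only to keep $\exp V_0$ a diffeomorphism and to bound the Taylor remainder, and neither step introduces negative powers of $\varepsilon$, so $C$ in \eqref{Quadratic-estimate} is indeed $\varepsilon$-independent.
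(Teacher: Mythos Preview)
Your proposal is correct and follows essentially the same route as the paper: you start from the functorial identity \eqref{lin-functorial}, decompose $F_\varepsilon'(V_0)V-F_\varepsilon'(0)V$ into the first-order piece $(I)$ and the zero-th order piece $(II)$, and bound $(I)$ via Fr\'echet smoothness of $d$, pull-back, parallel transport and exponential maps, exactly as the paper does. Your treatment of the $\varepsilon$-independence of $C$ is in fact more explicit than the paper's, which simply asserts it.
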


\begin{remark}
\label{WhySchauder}We also have some pointwise estimates tied to the feature
that $\tau$ is a cubic-form. In \cite{LWZ1} we derived that
\begin{align*}
&  \left\vert F^{\prime}\left(  V_{0}\right)  V-F^{\prime}\left(  0\right)
V\right\vert \left(  p\right) \\
&  \leq\left[  C_{7}\left(  \left\vert d\varphi\right\vert +\left\vert \nabla
V_{0}\right\vert \right)  ^{3}\left(  \left\vert V_{0}\right\vert \left\vert
V\right\vert +\left\vert \nabla V_{0}\right\vert \left\vert V\right\vert
+\left\vert V_{0}\right\vert \left\vert \nabla V\right\vert \right)
+C_{5}\left\vert V_{0}\right\vert \left\vert V\right\vert \right]  \left(
p\right)  ,
\end{align*}
where $\varphi:A_{\varepsilon}\rightarrow M$ is the embedding we used to
define $A_{\varepsilon}^{\prime}$. Because of the cubic terms, the following
quadratic estimate needed in the implicit function theorem in $W^{1,p}$
setting
\[
\left\Vert F^{\prime}\left(  V_{0}\right)  V-F^{\prime}\left(  0\right)
V\right\Vert _{L^{p}}\leq C\left\Vert V_{0}\right\Vert _{W^{1,p}}\left\Vert
V\right\Vert _{W^{1,p}.}%
\]
appears unavailable, for $\left(  \left\vert \nabla V_{0}\right\vert
^{3}\right)  ^{p}$ $\notin L^{p}$ in general. In contrast, the Cauchy-Riemann
operator of $J$-holomorphic curves is more linear in the $L^{p}$ setting: it
has the quadratic estimate (Proposition 3.5.3, \cite{MS})%
\[
\left\Vert F^{\prime}\left(  V_{0}\right)  V-F^{\prime}\left(  0\right)
V\right\Vert _{L^{p}\left(  \Sigma\right)  }\leq C\left\Vert V_{0}\right\Vert
_{W^{1,p}\left(  \Sigma\right)  }\left\Vert V\right\Vert _{W^{1,p}\left(
\Sigma\right)  .}%
\]
This is one of the key reasons that we use the Schauder setting.
\end{remark}

By our construction of the almost instanton $A_{\varepsilon}^{\prime}%
=\varphi\left(  A_{\varepsilon}\right)  $ and the smoothness of $\varphi$, it
is not hard to get the error estimate%
\[
\left\Vert F_{\varepsilon}\left(  0\right)  \right\Vert _{_{C^{\alpha}\left(
A_{\varepsilon}^{\prime},N_{A_{\varepsilon}^{\prime}/M}\right)  }}\leq
C\varepsilon^{1-\alpha}\text{. }%
\]
Combining $\left(  \ref{e-right-inverse-estimate}\right)  $, $\left(
\ref{Quadratic-estimate}\right)  $, by the implicit function theorem (e.g.
Proposition A.3.4, \cite{MS}) we can solve $F_{\varepsilon}\left(  V\right)
=0$, thus our main theorem \ref{correspondence} is proved. QED.

\section{Applications and further discussions\label{application}}

\subsection{New examples of instantons\label{new-instanton}}

Our main theorem can be used to construct new examples of instantons.

Let $X$ be a Calabi-Yau threefold containing a complex surface $S\subset X$
which contains a smooth curve $\Sigma\subset S$ which satisfy (i)
$H^{0}\left(  S,K_{S}\right)  \neq0$ (i.e. $p_{g}\left(  S\right)  \neq0$) and
(ii) $H^{0}\left(  \Sigma,K_{S}|_{\Sigma}\right)  =0$.\ Condition (i) implies
that $S$ can be deformed inside $X$ and condition (ii) is equivalent to
$H^{1}\left(  \Sigma,N_{\Sigma/S}\right)  =0$, namely $\Sigma\subset S$ is
Fredholm regular.

Let $\left\{  S_{t}\right\}  _{0\leq t\leq\varepsilon}$ be a smooth family of
deformations of $S$ inside $X$ and let $v=\frac{dS_{t}}{dt}|_{t=0}$ be the
normal vector field on $S=S_{0}$. By our assumption $v$ is nontrivial, and
after possible rescaling of the parameter $t$ for the family $S_{t}$, we may
assume $v$ is very small. Let $M=X\times S^{1}$ be the $G_{2}$ manifold as in
Example \ref{Calabi-Yau-G2}, and $C_{t}:=S_{t}\times\left\{  t\right\}
\subset M$ ($0\leq t\leq\varepsilon$) be the family of coassociative
submanifolds. They are disjoint since their second components $t$ are
different. Let $n_{0}:=\left(  0,\frac{\partial}{\partial\theta}\right)  $ be
the normal vector field on $C_{0}=S\times\left\{  0\right\}  $, then the
original complex structure $J_{0}$ on $S$ is induced from $n_{0}$, i.e.
$J_{0}=\frac{n_{0}}{\left\vert n_{0}\right\vert }\times$.

Let $n=\frac{dC_{t}}{dt}|_{t=0}=\left(  v,\frac{\partial}{\partial\theta
}\right)  $ be the other normal vector field on $C_{0}$. Then $n$ is
nonvanishing for $\frac{\partial}{\partial\theta}$ is nonvanishing on $S^{1}$.
The almost complex structure $J_{n}=\frac{n}{\left\vert n\right\vert }\times$
is close to but not equal to the original complex structure $J_{0}$ on $S$,
because $n$ is close to but not equal to $n_{0}$.

There must exist a Fredholm regular $J_{n}$-holomorphic curve $\Sigma
_{n}\subset$ $S$ near the original $J_{0}$-holomorphic curve $\Sigma$, because
$\Sigma\subset S$ is Fredholm regular and will persist after small
perturbations of $J_{0}$ on $S$.

Applying our main theorem to $\Sigma_{n}\subset C_{0}$, we get an instanton
$A\subset M$ with boundaries on $C_{0}\cup C_{\varepsilon}$. It is \emph{not}
the trivial instanton $\Sigma\times\left[  0,\varepsilon\right]  $, which has
upper boundary lying on $S\times\left\{  \varepsilon\right\}  $, not on
$C_{\varepsilon}=S\left(  \varepsilon\right)  \times\left\{  \varepsilon
\right\}  $.

\subsection{Further remarks \label{discussion}}

A few remarks of our main theorem are in order: First, counting such thin
instantons is basically a problem in four manifold theory because of Bryant's
result \cite{Bryant} which says that the zero section $C$ in $\Lambda_{+}%
^{2}\left(  C\right)  $ is always a coassociative submanifold for an
incomplete $G_{2}$-metric on its neighborhood provided that the bundle
$\Lambda_{+}^{2}\left(  C\right)  $ is topologically trivial.

Second, when the normal vector field $n=\frac{dC_{t}}{dt}|_{t=0}$ has zeros,
our main Theorem \ref{correspondence} should still hold true (work in progress
\cite{LWZ2}). However it would require a possible change of the Fredholm
set-up of the current method and a good understanding of the Seiberg-Witten
theory on any four manifold with a \emph{degenerated symplectic form} as in
Taubes program (\cite{Ta ICM1998}, \cite{Ta SW GW deg}). In the special case
when $\Sigma$ is disjoint from $\left\{  n=0\right\}  $, our theorem
\ref{correspondence} is obviously true as our analysis only involves the local
geometry of $\Sigma$ in $M$.

Third, if we do not restrict to instantons of small volume, then we have to
take into account of \emph{bubbling}{\footnotesize \ }phenomenon as in the
pseudo-holomorphic curves case, and gluing of instantons of big and small
volumes similar to \cite{Oh Zhu} in Floer trajectory case. Nevertheless, one
does not expect bubbling can occur when volumes of instantons are small, thus
they would converge to a $J_{n}$-holomorphic curve in $C$ as $\varepsilon
\rightarrow0$.

Last, we expect our result still holds in the \textit{almost} $G_{2}$ setting,
namely $\Omega$ is only a closed form rather than a parallel form. This is
because our gluing analysis relies mainly on the Fredholm regularity property
of the linearized instanton equation. Such a flexibility could be useful for
finding regular holomorphic curves $\Sigma\subset C$ in order to apply our theorem.

\bigskip


\begin{thebibliography}{99}                                                                                               %


\bibitem {Acharya}B. S. Acharya, B. Spence, \textit{Supersymmetry and M theory
on 7-manifolds, }[hep-th/0007213].

\bibitem {Mina Vafa}M. Aganagic, C. Vafa, \textit{Mirror Symmetry and a
}$\mathit{G}_{2}$\textit{\ Flop, }J. High Energy Phys. 2003, no. 5, 061, 16
pp. .

\bibitem {Akbulut Salur}S. Akbulut, S. Salur, \textit{Deformations in }$G_{2}%
$\textit{\ manifolds,} Advances in Mathematics Volume 217, Issue 5, 20 March
2008, p2130-2140.

\bibitem {BeasleyWitten}C. Beasley, E. Witten, \textit{A Note on Fluxes and
Superpotentials in M-theory Compactifications on Manifolds of G\_2 Holonomy},
Journal of High Energy Physics, Volume 2002.

\bibitem {BW}B. Booss-Bavnbek and K. Wojciechowski, \textit{Elliptic Boundary
Problems For Dirac Operators, }Birkh\"{a}user (1993).

\bibitem {Bryant}R. Bryant, \textit{Calibrated embeddings in the special
Lagrangian and coassociative cases.} Special issue in memory of Alfred Gray
(1939--1998). Ann. Global Anal. Geom. 18 (2000), no. 3-4, 405--435.

\bibitem {CHNP}A. Corti, M. Haskins, J. Nordstrom, T. Pancini, $G_{2}%
$\textit{\ manifolds and associative submanifolds via semi-Fano }%
$3$\textit{-folds,} arXiv: 1207.4470.

\bibitem {Fukaya Oh}K. Fukaya, Y.G. Oh, \textit{Zero-loop open strings in the
cotangent bundle and Morse homotopy. }Asian J. Math. 1 (1997), no. 1, 96--180.

\bibitem {FOOO}K. Fukaya, Y.G. Oh, H. Ohta, K. Ono, \textit{Lagrangian
intersection Floer theory - anomoly and obstruction}, AMS/IP Studies in
Advanced Mathematics, International Press.

\bibitem {GayetWitt}D. Gayet, F. Witt, \textit{Deformations of associative
submanifolds with boundary, } Adv. Math. 226 (2011), 2351-2370.

\bibitem {GT}D. Gilbarg and N. Trudinger, \textit{Elliptic Partial
Differential Equations of Second Order,} 2nd Edition, Springer Verlag (1983).

\bibitem {GV2}R. Gopakumar, C. Vafa, \textit{M-theory and topological strings
- II,} [hep-th/9812127].

\bibitem {Harvey Lawson}R. Harvey, B. Lawson, \textit{Calibrated geometries},
Acta Math. 148 (1982), 47-157.

\bibitem {Harvey Moore}H.A. Harvey, G. Moore,\textit{\ Superpotentials and
membrane instantons,} [hep-th/9907026].

\bibitem {Hitchin 3 form}N. Hitchin, \textit{The geometry of three forms in 6
and 7 dimensions, J. Differential Geom. 55 (2000), no. 3, }547--576.

\bibitem {Hitchin Stable}N. Hitchin, \textit{Stable forms and special metrics,
}Comtemporary Math. 288 (2001), 70-89.

\bibitem {Joyce G2}D. D. Joyce, \textit{Compact Riemannian 7-manifolds with
holonomy G2. I, II,} J. Diff. Geom. 43 (1996), no. 2, 291-328, 329-375.

\bibitem {Joyce SH}D. D. Joyce, \textit{Compact manifolds with special
holonomy}, Oxford Mathematical Monographs, Oxford University Press, Oxford, 2000.

\bibitem {Ko}A. Kovalev, \textit{Twisted connected sums and special Riemannian
holonomy}, J. Reine Angew. Math. 565 (2003),125-160.

\bibitem {Lee Leung Instanton Brane}J.H. Lee and N.C. Leung,
\textit{Instantons and Branes in Manifolds with Vector Cross Product , }Asian
Journal of Math. 2009.

\bibitem {LWZ1}N.C. Leung, X. Wang, K. Zhu, \textit{Thin instantons in G}%
$_{2}$\textit{-manifolds and Seiberg-Witten invariants,} to appear in J. Diff.
Geom., also available at arXive: math/SG/1107.1947.

\bibitem {LWZ2}N.C. Leung, X. Wang, K. Zhu, \textit{Thin instantons in }%
$G_{2}$\textit{-manifolds with boundaries on intersecting coassociative
submanifolds,} in preparation.

\bibitem {MS}D. McDuff, D. Salamon, $J$-\textit{holomorphic Curves and
Symplectic Topology, }Colloquim Publications, vol 52, AMS, Providence RI, 2004.

\bibitem {McLean}R.C. McLean, \textit{Deformations of Calibrated
Submanifolds,} Commun. Analy. Geom. \textbf{6} (1998) 705-747.

\bibitem {Oh Zhu}Y. Oh, K. Zhu, \textit{Thick-thin decomposition of Floer
trajectories and adiabatic gluing, }arXive: 1103.3525.

\bibitem {Ta Gr Sw}C.Taubes, \textit{Gr=%
$>$%
SW From pseudo-holomorphic curves to Seiberg-Witten solutions, }J.
Differential Geom.\textit{ }51 (1999), no. 2, 203--334

\bibitem {Ta SW GW}C.Taubes, S\textit{W=%
$>$%
Gr: From the Seiberg-Witten equations to pseudo-holomorphic curves, }J. Amer.
Math. Soc, Vol 9, no. 3, July 1996, 845-918.

\bibitem {Ta SW equal GW}C.Taubes, \textit{Gr=SW: Counting curves and
connections, }J. Differential Geom.\textit{ }52 (1999), no. 3, 453--609

\bibitem {Ta ICM1998}C. Taubes,\emph{\ }\textit{The geometry of the
Seiberg-Witten invariants}. Proceedings of the International Congress of
Mathematicians, Vol. II (Berlin, 1998). Doc. Math. 1998, Extra Vol. II, 493--504.

\bibitem {Ta SW GW deg}C. Taubes, \textit{Seiberg-Witten invariants and
pseudo-holomorphic subvarieties for self-dual, harmonic 2-forms.} Geom. Topol.
3 (1999), 167--210.

\bibitem {Wi Morse SUSY}E. Witten, \textit{Supersymmetry and Morse theory,} J.
Differential Geom. Volume 17, Number 4 (1982), 661-692.

\bibitem {Yau}S.-T. Yau \textit{On Ricci curvature of a compact K\"{a}hler
manifold and complex Monge-Amp\'{e}re equation I.} Comm. Pure and App. Math.
31, pp. 339--411, 1978.
\end{thebibliography}
\end{document}